\newcommand{\CM}{Cohen-Macaulay }
\newcommand{\LC}{local cohomology module }
\newcommand{\wrt}{with respect to}
\newcommand{\n}{\mathfrak{n} }
\newcommand{\m}{\mathfrak{m} }
\newcommand{\Z}{\mathbb{Z} }
\newcommand{\CC}{\mathbb{C} }
\newcommand{\N}{\mathbb{N} }
\newcommand{\GG}{\mathbb{G}}
\newcommand{\EE}{\mathbb{E}}
\providecommand{\Q}{{\mathbb Q}}
\newcommand{\rt}{\rightarrow}
\newcommand{\im}{\operatorname{image}}
\newcommand{\Ass}{\operatorname{Ass}}
\newcommand{\depth}{\operatorname{depth}}
\newcommand{\ann}{\operatorname{ann}}
\newcommand{\hgt}{\operatorname{height}}
\providecommand\soc{\text{\rm soc}}
\providecommand\Mod{\text{\rm Mod}}
\newcommand{\Supp}{\operatorname{Supp}}
\providecommand\Spec{\text{\rm Spec}}
\newcommand{\projdim}{\operatorname{projdim}}
\newcommand{\injdim}{\operatorname{injdim}}
\newcommand{\Hom}{\operatorname{Hom}}
\newcommand{\Ext}{\operatorname{Ext}}
\theoremstyle{plain}
\newtheorem{theorem}{Theorem}[section]
\newtheorem{lemma}[theorem]{Lemma}
\newtheorem{proposition}[theorem]{Proposition}
\theoremstyle{definition}
\newtheorem{definition}[theorem]{Definition}
\newtheorem{remark}[theorem]{Remark}
\newtheorem{example}[theorem]{Example}
\theoremstyle{remark}
\newtheorem*{claim*}{\it Claim}
\newtheorem*{note*}{\bf Note}
\begin{document}

\title[Graded components of local cohomology modules]{Graded components of local cohomology modules of invariant rings}
\address{Department of Mathematics, Indian Institute of Technology Bombay, Powai, Mumbai 400 076, India}
\author{Tony J. Puthenpurakal}
\email{\href{mailto:tputhen@math.iitb.ac.in}{tputhen@math.iitb.ac.in}}
\author{Sudeshna Roy}
\email{\href{mailto:sudeshnaroy@math.iitb.ac.in}{sudeshnaroy@math.iitb.ac.in}}
\date{\today}
\keywords{local comohology, graded local cohomology, invariant rings, Weyl Algebra, Generalized Eulerian modules}

\begin{abstract}
Let $A$ be a regular domain containing a field $K$ of characteristic zero, $G$ be a finite subgroup of the group of automorphisms of $A$ and $B=A^G$ be the ring of invariants of $G$. Let $S= A[X_1,\ldots, X_m]$ and $R= B[X_1, \ldots, X_m]$ be standard graded with $\deg A=0$, $\deg B=0$ and $\deg X_i=1$ for all $i$. Extend the action of $G$ on $A$ to $S$ by fixing $X_i$. Note $S^G=R$. Let $I$ be an arbitrary homogeneous ideal in $R$. The main goal of this paper is to establish a comparative study of graded components of local cohomology modules $H_I^i(R)$ that would be analogs to those proven in the paper \cite{TP2} for $H_J^i(S)$ where $J$ is an arbitrary homogeneous ideal in $S$. 
\end{abstract}
\maketitle
\section{introduction}
\s\label{sa}{\it Standard assumption:} Throughout this paper $A$ is a regular domain containing a field $K$ of characteristic zero, $G$ is a finite subgroup of the group of automorphisms of $A$ and $B=A^G$ is the ring of invariants of $G$. Let $S= A[X_1, \ldots, X_m]$ and $R= B[X_1, \ldots, X_m]$ be standard graded with $\deg A=0$, $\deg B=0$ and $\deg X_i=1$ for all $i$. Extend the action of $G$ on $A$ to $S$ by fixing $X_i$. Note $S^G=R$. Set $M=T(R)= \bigoplus_{n \in \Z} M_n$ where $T(-) = H^{i_1}_{I_1}(H^{i_2}_{I_2}(\cdots H^{i_r}_{I_r}(-) \cdots)$  for some homogeneous ideals $I_1, \ldots, I_r$ in $R$ and $i_1,\ldots,i_r \geq 0$. Set $N=T'(S)= \bigoplus_{n \in \Z} N_n$ where $T'(-)= H^{i_1}_{I_1S}(H^{i_2}_{I_2S}(\cdots H^{i_r}_{I_rS}(-) \cdots)$ where $T'(-)= H^{i_1}_{I_1S}(H^{i_2}_{I_2S}(\cdots H^{i_r}_{I_rS}(-) \cdots)$.

\begin{note*} 
Since $G$ is a finite group, any element $a \in A$ satisfies the polynomial $f_a(Z)= \prod_{g \in G}(Z-ga) \in B[Z]$ and hence is integral over $B$. So by \cite[Theorem 6.4.5]{BH} we have $B$ is Cohen-Macaulay (as $A$ is regular and hence Cohen-Macaulay). 
\end{note*}

In the paper \cite{TP2} we have seen that $T'(S)$ behaves nicely and has several good properties. In this paper we have proved the following analogous results for $T(R)$.

\vspace{0.15cm}
\noindent
{\bf I.} ({\it Bass numbers:})
The $j$-th Bass number of an $R$-module $E$ with respect to a prime ideal $P$ is defined as $\mu_j(P, E)= \dim_{k(P)} \Ext_{R_P}(k(P), E_P)$ where $k(P)$ is the residue field of $R_P$. Now we know that $\mu_j(P, E)$ is always a finite number (possibly zero) for all $j \geq 0$ if $E$ is a finitely generated $R$-module. But homogeneous components of $M=T(R)$ need not be finitely generated as $B$-modules. So $\mu_j(P, M_n)$ may not be a finite number. If $B_P$ is Gorenstein for some prime ideal $P$ of $B$ then we get the following result.

\begin{theorem}[with hypotheses as in \ref{sa}]\label{ibass}
Let $P$ be a prime ideal in $B$ such that $B_P$ is Gorenstein. Fix $j \geq 0$. Then EXACTLY one of the following holds:
\begin{enumerate}[\rm(i)]
\item $\mu_j(P, M_n)$ is infinite for all $n \in \Z$.
\item $\mu_j(P, M_n)$ is finite for all $n \in \Z$. In this case EXACTLY one of the following holds:
\begin{enumerate}[(a)]
\item $\mu_j(P, M_n)=0$ for all $n \in \Z$.
\item $\mu_j(P, M_n) \neq 0$ for all $n \in \Z$.
\item $\mu_j(P, M_n)\neq 0$ for all $n \geq 0$ and $\mu_j(P, M_n)= 0$ for all $n<0$.
\item $\mu_j(P, M_n)\neq 0$ for all $n \leq -m$ and $\mu_j(P, M_n)= 0$ for all $n > -m$.
\end{enumerate}
\end{enumerate}
\end{theorem}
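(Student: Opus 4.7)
The plan is to reduce the theorem to the analogous statement for $N = T'(S)$ proven in \cite{TP2}, via Reynolds-operator splitting together with a Gorenstein Bass-number comparison. Since $|G|^{-1} \in K$, the Reynolds operator $\rho \colon S \to R$, $\rho(s) = |G|^{-1}\sum_{g \in G} g(s)$, is a degree-preserving $R$-linear retraction of $R \hookrightarrow S$, so $S = R \oplus W$ as graded $R$-modules with $W = \ker \rho$. Because $I_j$-torsion and $I_j S$-torsion agree on any $R$-module and local cohomology commutes with direct sums, iterating gives
\[
N \;=\; T'(S) \;=\; T(R) \oplus T(W) \;=\; M \oplus L
\]
as graded $R$-modules, and hence $(M_n)_P$ is a $B_P$-direct summand of $(N_n)_P$ for every $n \in \Z$ and every prime $P$ of $B$.

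Next, by the analog of Theorem~\ref{ibass} from \cite{TP2} applied to $N$, for every prime $Q$ of $A$ (automatically Gorenstein since $A$ is regular) and every $j \ge 0$, $\mu_j(Q, N_n)$ satisfies the stated four-case dichotomy in $n$. The technical core is to transfer this from the primes $Q_1, \ldots, Q_t$ of $A$ lying over $P$ to the prime $P$ itself. The Gorenstein hypothesis on $B_P$ enters here: since $A$ is module-finite over $B$ and both rings are Cohen--Macaulay of the same Krull dimension, $A$ is maximal Cohen--Macaulay over $B$, so $A_P$ is MCM over the Gorenstein ring $B_P$ and $\Hom_{B_P}(A_P, B_P)$ is a canonical module of $A_P$. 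Applied with $E = (N_n)_P$, the Cartan--Eilenberg change-of-rings spectral sequence
\[
E_2^{p,q} \;=\; \Ext^p_{A_P}(\Tor^{B_P}_q(k(P), A_P),\, E) \;\Longrightarrow\; \Ext^{p+q}_{B_P}(k(P), E)
\]
then yields the qualitative comparison that $\mu_j(P, N_n)$ is zero (resp.\ finite) if and only if $\mu_j(Q_i, N_n)$ is zero (resp.\ finite) for every $i$, and the four-case dichotomy for $N$ thereby descends to $\mu_j(P, N_n)$. Finally, since $|G| \in B_P^\times$, the functor $(-)^G$ is exact and commutes with $\Ext$, giving $\Ext^j_{B_P}(k(P),(M_n)_P) = \Ext^j_{B_P}(k(P),(N_n)_P)^G$; the $G$-equivariant Weyl-algebra operators $X_i,\partial_i$ used in \cite{TP2} to witness the dichotomy on $N$ restrict to $B$-linear maps $M_n \to M_{n\pm 1}$ and carry the pattern over to $M$.

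The main obstacle is the Gorenstein Bass-number transfer in the second paragraph. A direct-summand argument alone yields only the one-sided estimate $\mu_j(P, M_n) \le \mu_j(P, N_n)$, which is insufficient to propagate the full four-case dichotomy. The Gorenstein hypothesis on $B_P$ is precisely what supplies the two-sided control, through the canonical module $\Hom_{B_P}(A_P, B_P)$ of $A_P$ together with the change-of-rings spectral sequence above.
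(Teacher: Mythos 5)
There is a genuine gap at what you yourself identify as the technical core: the ``Gorenstein Bass-number transfer'' from the primes $Q_i$ of $A$ over $P$ down to $P$. The change-of-rings spectral sequence
$\Ext^p_{A_P}(\Tor^{B_P}_q(k(P),A_P),E)\Rightarrow \Ext^{p+q}_{B_P}(k(P),E)$
does not preserve cohomological degree unless $A_P$ is flat (equivalently free) over $B_P$. Here $A$ is regular but $B_P$ is only Gorenstein, so $A_P$ is merely maximal Cohen--Macaulay over $B_P$ and $\Tor^{B_P}_q(k(P),A_P)$ is typically nonzero for $q>0$; consequently $\Ext^j_{B_P}(k(P),E)$ receives contributions from $\Ext^p_{A_P}(-,E)$ for \emph{several} $p\le j$. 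For a \emph{fixed} $j$ you therefore cannot extract from this spectral sequence the statement that $\mu_j(P,N_n)$ vanishes (or is finite) if and only if $\mu_j(Q_i,N_n)$ does, and the four-case pattern in $n$ for fixed $j$ does not descend. You also never actually run the spectral-sequence argument (convergence issues, differentials, possible cancellation), so even the weaker qualitative claims are asserted rather than proved. The Reynolds splitting $N=M\oplus L$ is correct but, as you note, only gives $\mu_j(P,M_n)\le\mu_j(P,N_n)$, which is useless for the lower bounds needed in cases (b)--(d).

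The paper's proof avoids any $\Ext^j$-to-$\Ext^j$ comparison between $B$ and $A$. After localizing so that $(B,\m)$ is Gorenstein local, it establishes (Lemma \ref{inf}) the structural isomorphism $H_{\m A}^j(N_n)\cong H_{\m A}^{g}(A)^{s_j(n)}$ of $A*G$-modules, where the single multiplicity $s_j(n)$ is common to all maximal ideals $\n_l$ of $A$ over $\m$ because $G$ permutes them transitively; applying the exact functor $(-)^G$ gives $H_{\m}^j(M_n)\cong H_{\m}^{g}(B)^{s_j(n)}$. Lyubeznik's lemma (\ref{ilyu}), applicable because $(H_\m^j(M_n))_\m$ is injective when $B$ is Gorenstein, converts the $j$-th Bass number into a $0$-th one: $\mu_j(\m,M_n)=\mu_0(\m,H_\m^j(M_n))=s_j(n)=\mu_0(\n_l,H_{\m A}^j(N_n))$. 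The dichotomy in $n$ is then read off from \cite[Theorem 9.2]{TP2} applied to the Lyubeznik functor $H^j_{\m S}(T'(-))$ on $S$. If you want to salvage your approach, you should replace the spectral-sequence step by this mechanism: it is the identification of $\mu_j(\m,M_n)$ with a \emph{zeroth} Bass number of a local cohomology module of $N$ over $A$ --- not a comparison of $j$-th Bass numbers across the extension $B\subseteq A$ --- that makes the transfer work.
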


In the following result $B$ is Cohen-Macaulay but not necessarily Gorenstein.
\begin{theorem}[with hypotheses as in \ref{sa}]
Assume $m=1$. Fix $j \geq 0$. Let $P$ be a prime ideal in $B$. Then $\mu_j(P, M_n)$ is finite for all $n \in \Z$. 
\end{theorem}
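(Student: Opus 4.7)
The plan is to reduce the claim to the analogous (known) statement for $N=T'(S)$ over $A$ proved in \cite{TP2}, and then to transfer finiteness of Bass numbers from $A$ down to $B$ using that $B\subseteq A$ is a module-finite extension.

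Since $\chars K=0$ and $G$ is finite, the Reynolds operator $\rho\colon S\to R$, $\rho(s)=\frac{1}{|G|}\sum_{g\in G}g(s)$, is a homogeneous $R$-linear splitting of the inclusion $\iota\colon R\hookrightarrow S$. For any homogeneous ideal $I\subseteq R$ and any $S$-module $Y$, the Cech complex on a set of generators of $I$ yields $H^i_I(Y)\cong H^i_{IS}(Y)$ as $R$-modules; iterating, $T(S)\cong T'(S)$ as $R$-modules. Applying the functor $T$ (a composition of additive functors on $R$-modules) to $\rho\circ\iota=\operatorname{id}_R$ then shows that $M=T(R)$ is an $R$-module direct summand of $N=T'(S)$, and so $M_n$ is a $B$-module direct summand of $N_n$ for every $n\in\Z$. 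In particular $\mu_j(P,M_n)\le \mu_j(P,N_n)$, and it suffices to prove $\mu_j(P,N_n)<\infty$.

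The analogue of the present theorem for $N=T'(S)$ in the case $m=1$ proved in \cite{TP2} asserts that $\mu_j^A(\mathfrak{q},N_n)<\infty$ for every prime $\mathfrak{q}$ of $A$, every $j\ge 0$, and every $n\in\Z$. Fix $n$ and $j$, and let $\mathfrak{q}_1,\ldots,\mathfrak{q}_r$ be the primes of $A$ lying over $P$. Since $B\subseteq A$ is module-finite, $A_P:=A\otimes_B B_P$ is semilocal with maximal ideals $\mathfrak{q}_iA_P$ and finite residue extensions $[k(\mathfrak{q}_i):k(P)]<\infty$. The Cartan-Eilenberg change-of-rings spectral sequence
\[
E_2^{p,q}=\Ext^p_{A_P}\!\bigl(\Tor^{B_P}_q(k(P),A_P),(N_n)_P\bigr)\ \Longrightarrow\ \Ext^{p+q}_{B_P}\!\bigl(k(P),(N_n)_P\bigr)
\]
is first-quadrant and therefore convergent. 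Each $\Tor^{B_P}_q(k(P),A_P)$ is a finitely generated $A_P$-module killed by $PA_P$, hence admits a finite filtration whose composition factors are among the $k(\mathfrak{q}_i)$. Applying the long exact sequences of $\Ext$, $E_2^{p,q}$ is a finite iterated extension of subquotients of $\Ext^p_{A_{\mathfrak{q}_i}}(k(\mathfrak{q}_i),(N_n)_{\mathfrak{q}_i})$, each of which has $k(\mathfrak{q}_i)$-dimension $\mu_p^A(\mathfrak{q}_i,N_n)<\infty$ and hence finite $k(P)$-dimension. Only the finitely many pairs $(p,q)$ with $p+q=j$ contribute to the abutment, so $\mu_j(P,N_n)<\infty$, completing the proof.

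The principal obstacle is to correctly identify and invoke the finiteness statement for the graded components of $N$ over $A$ from \cite{TP2}: its proof exploits the Weyl-algebra and generalized Eulerian module structure of iterated local cohomology of $S=A[X_1]$, a feature that behaves particularly well when $m=1$ because of the single pair $X_1,\partial_{X_1}$. Once that input is in hand, the change-of-rings step is a standard but delicate bookkeeping exercise that must track dimensions over $k(P)$ rather than over the finer residue fields $k(\mathfrak{q}_i)$.
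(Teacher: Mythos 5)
Your proposal is correct, and in the case where $B_P$ is not Gorenstein it takes a genuinely different route from the paper. Both arguments rest on the same external input, namely the $m=1$ finiteness theorem of \cite{TP2} applied to $N=T'(S)$ over the regular ring $A$; they differ in how that finiteness is transported down to $B$. The paper first shows (Lemma \ref{inf}) that $H^j_{\m}(M_n)\cong H^d_{\m}(B)^{s_j(n)}$ with $s_j(n)$ finite, and then, since $H^d_{\m}(B)$ need not be injective when $B$ is not Gorenstein, it cannot invoke Lyubeznik's lemma; instead it runs an induction along the minimal injective resolution of $M_n$, using that $H^d_{\m}(B)$ is Artinian with finite socle to bound each $r_j=\mu_j(\m,M_n)$ step by step. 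You instead split $M_n$ off as a $B$-module direct summand of $N_n$ via the Reynolds operator (the same splitting the paper exploits later, in Section 7, for associated primes) and then bound the Bass numbers of $N_n$ \emph{over} $B$ by the change-of-rings spectral sequence for $B_P\to A_P$. This bypasses the injective-resolution induction and the $A*G$-module analysis of Lemma \ref{inf} entirely, at the cost of invoking the full strength of the Bass-number finiteness over $A$ rather than just the finiteness of the exponents $s_j(n)$. Two minor points to tighten, neither a gap: (1) ``module-finite'' is more than you need and is not completely immediate in this generality; what you actually use --- that only finitely many primes of $A$ lie over $P$ (they form a single $G$-orbit), that $A_P/PA_P$ is Artinian with residue fields finite over $k(P)$ (every element of $A$ is integral of degree at most $|G|$ over $B$, and we are in characteristic zero), and that $\Tor^{B_P}_q(k(P),A_P)$ is finitely generated over the Noetherian ring $A_P$ --- all follow from integrality and Noetherianity alone. (2) You should say explicitly why $\Ext^p_{A_P}(k(\mathfrak{q}_i),(N_n)_P)$ agrees with $\Ext^p_{A_{\mathfrak{q}_i}}(k(\mathfrak{q}_i),(N_n)_{\mathfrak{q}_i})$: the former is a $k(\mathfrak{q}_i)$-vector space, hence unchanged by localizing at $\mathfrak{q}_i$, and $\Ext$ out of a finitely generated module commutes with localization.
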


\noindent
{\bf II.} ({\it Growth of Bass numbers:})
Fix $j \geq 0$. Let $P$ be a prime ideal in $B$ such that $B_P$ is Gorenstein and $\mu_j(P, M_n)$ is finite for all $n \in \Z$. We now investigate the growth of the function $n \mapsto \mu_j(P, M_n)$ as $n \to \infty$ and as $n \to - \infty$.
\begin{theorem}[with hypothesis as in \ref{sa}]
Let $P$ be a prime ideal in $B$ such that $B_P$ is Gorenstein. Fix $j \geq 0$. Suppose $\mu_j(P, M_n)$ is finite for all $n \in \Z$. Then there exist polynomials $f_{M}^{j, P}(Z), g_M^{j, P}(Z)\in \Q[Z]$ of degree $\leq m-1$ such that \[f_{M}^{j, P}(n)=\mu_j(P, M_n) \text{ for all } n\ll0 \quad\text{AND}\quad g_{M}^{j, P}(n)=\mu_j(P, M_n) \text{ for all } n\gg0.\]
\end{theorem}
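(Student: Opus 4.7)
The plan is to transfer the statement to the analogous result for $N=T'(S)$ from \cite{TP2} via the Reynolds splitting available in characteristic zero. Since $|G|$ is invertible in $K$, the projector $\rho=|G|^{-1}\sum_{g\in G} g : S\rt R$ splits $S=R\oplus U$ as $R$-modules. Because each ideal $I_kS$ is $G$-stable and local cohomology can be computed by the Čech complex (on which $G$ acts), this decomposition propagates through the iterated local cohomology functor $T'$, yielding a graded $R$-module decomposition $N=M\oplus W$ with $W:=T'(U)$. Taking the $n$-th graded components and using additivity of Bass numbers over direct sums,
\[
\mu_j(P,N_n)=\mu_j(P,M_n)+\mu_j(P,W_n)\qquad (n\in\Z,\ j\geq 0).
\]
Since each $g\in G$ fixes the $X_i$, $\rho$ commutes with the derivations $\partial/\partial X_i$, so $W$ inherits the same generalized Eulerian $D_m$-module structure that $N$ enjoys, and the results of \cite{TP2} apply equally to $W$.

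Next, I would convert Bass numbers computed over $B_P$ into Bass numbers over the localizations $A_\fP$ for primes $\fP$ of $A$ lying over $P$. Since $A$ is module-finite over $B$ (cf.\ the Note after \ref{sa}), the semilocalization $A_P:=A\otimes_B B_P$ has maximal ideals exactly $\fP_1 A_P,\ldots,\fP_s A_P$ and is Cohen-Macaulay over $B_P$ (as $A$ is regular); combined with the Gorenstein hypothesis on $B_P$, it follows that $A_P$ is Gorenstein. The adjunction
\[
\Ext^j_{B_P}(k(P),E)\cong \Ext^j_{A_P}\!\bigl(k(P)\otimes_{B_P}A_P,\,E\bigr)
\]
for any $A$-module $E$, combined with a fixed free resolution of the finite-dimensional $k(P)$-algebra $k(P)\otimes_{B_P}A_P$ over $A_P$ and the Gorenstein duality between $A_P$ and each $A_{\fP_i}$, produces an identity
\[
\mu_j(P,E)=\sum_{i,t} c_{i,t}\,\mu_{j-t}(\fP_i,E)
\]
with nonnegative integer coefficients $c_{i,t}$ and integer shifts $t$ depending only on $P$ and $j$, not on $E$.

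Applying this with $E=N_n$ and $E=W_n$ and invoking the corresponding polynomial-growth theorem in \cite{TP2}, which asserts that $n\mapsto\mu_{j'}(\fP,N_n)$ and $n\mapsto\mu_{j'}(\fP,W_n)$ agree with polynomials in $\Q[Z]$ of degree $\leq m-1$ for $n\gg 0$ and for $n\ll 0$ (note $A_\fP$ is automatically regular, hence Gorenstein), one deduces the same polynomial behavior for $n\mapsto\mu_j(P,N_n)$ and $n\mapsto\mu_j(P,W_n)$. Subtracting via the first display yields $f_M^{j,P}, g_M^{j,P}\in\Q[Z]$ of degree $\leq m-1$ describing $\mu_j(P,M_n)$ in the two tails. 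The principal obstacle is the change-of-rings step: one must justify that the shifts $t$ and multiplicities $c_{i,t}$ are genuinely constants independent of the module $E$ (hence of $n$), so that polynomial growth is stable under the resulting linear combination. This is where the Gorenstein condition on $B_P$ is essentially used, controlling the Ext resolution of the Artinian algebra $A_P/PA_P$.
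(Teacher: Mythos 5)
Your proposal has two genuine gaps, and it also misses the mechanism the paper actually uses. First, the step ``the results of \cite{TP2} apply equally to $W$'' does not work. The splitting $S=R\oplus U$ is only a splitting of $R$-modules; $U$ is not an $S$-module, so $W=T(U)$ is a Lyubeznik-type functor applied to a module over $R=B[X_1,\ldots,X_m]$, whose base ring $B$ is \emph{not} regular. The entire machinery of \cite{TP2} (holonomicity, generalized Eulerian $D$-module structure, the polynomial-growth theorem for Bass numbers) is developed for $A[X_1,\ldots,X_m]$ with $A$ regular; $W$ carries at best a $B\langle X_i,\partial_i\rangle$-structure, which is exactly the situation \cite{TP2} cannot handle --- indeed, if \cite{TP2} applied to $W$ it would apply to $M=T(R)$ itself and this paper would be unnecessary. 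Second, the change-of-rings identity $\mu_j(P,E)=\sum_{i,t}c_{i,t}\,\mu_{j-t}(\fP_i,E)$ with coefficients independent of $E$ is unsubstantiated and almost certainly false in this generality: $A$ is module-finite but in general not flat over $B$ (a maximal Cohen--Macaulay module over a Gorenstein ring need not be free), so the adjunction does not pass to $\Ext$ without a spectral sequence whose contributions depend on the module. You correctly identify this as the ``principal obstacle,'' but it is not resolved, and the theorem cannot be considered proved with it outstanding.

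The paper's route avoids both problems. Working locally at $\m=P$ with $B_\m$ Gorenstein, Lemma \ref{inf} shows $H^j_{\m}(M_n)\cong H^g_{\m}(B)^{s_j(n)}\cong E_B(B/\m)^{s_j(n)}$, where the same cardinal $s_j(n)$ is the multiplicity of $E_A(A/\n_l)$ in $H^j_{\m A}(N_n)$ for any maximal ideal $\n_l$ of $A$ over $\m$ (the multiplicities are equal because $G$ permutes the $\n_l$ transitively). Combined with Lyubeznik's Lemma \ref{ilyu} this gives the exact identity $\mu_j(\m,M_n)=s_j(n)=\mu_0\bigl(\n_l,V_n\bigr)$ with $V=H^j_{\m S}(N)$. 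The crucial point is that $V$ is again a graded Lyubeznik functor applied to $S$ over the \emph{regular} base $A$, so Theorem 1.11 of \cite{TP2} applies verbatim to $n\mapsto\mu_0(\n_l,V_n)$ and yields the two polynomials of degree $\le m-1$ directly --- no subtraction, no shift of homological degree, and no change-of-rings formula is needed. If you want to salvage your outline, you would have to replace both of your transfer steps by an identity of this exact-equality type; as written, the argument does not go through.
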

Fix $j \geq 0$. If $M_c=0$ for some $c$ then for any prime ideal $P$ in $B$ we get $\mu_j(P, M_c)=0$ is finite and hence by Theorem \ref{ibass} it follows that $\mu_j(P, M_n)$ is finite for all $n \in \Z$. For such cases we prove the following result.
\begin{theorem}[with hypothesis as in \ref{sa}]
Let $P$ be a prime ideal in $B$ such that $B_P$ is Gorenstein. Fix $j \geq 0$. Suppose $\mu_j(P, M_c)=0$ for some $c$ {\rm(} this holds if for instance $M_c=0$\rm{)}. Then 
\begin{align*}
&f_{M}^{j, P}(Z)=0  \quad \text{or} \quad \deg f_{M}^{j, P}(Z)= m-1,\\
&g_{M}^{j, P}(Z)=0 \quad \text{or} \quad \deg g_{M}^{j, P}(Z)= m-1.
\end{align*}
\end{theorem}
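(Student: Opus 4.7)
I would split the proof into a reduction using Theorem \ref{ibass} followed by a degree-comparison argument that transfers the analogous statement for $N = T'(S)$ from \cite{TP2} via $G$-invariants.

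\emph{Step 1 (reduction to the non-trivial cases).} By Theorem \ref{ibass}, the hypothesis $\mu_j(P, M_c) = 0$ places us in case (ii) and excludes sub-case (b), so we are in case (a), (c), or (d). In case (a), both $f$ and $g$ vanish identically and the conclusion is immediate. In case (c), $\mu_j(P, M_n) = 0$ for $n < 0$ (giving $f_M^{j,P} \equiv 0$) while $\mu_j(P, M_n) \neq 0$ for every $n \geq 0$, and we must show $\deg g_M^{j,P} = m-1$. Case (d) is symmetric, requiring $\deg f_M^{j,P} = m-1$. Thus the real content is that a non-zero Bass polynomial attains the maximal allowable degree $m-1$.

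\emph{Step 2 (descent from $N$ to $M$).} Because $K$ has characteristic zero and $G$ is finite, the Reynolds operator $\rho = |G|^{-1}\sum_{g \in G} g$ splits the inclusion $R \hookrightarrow S$ and, being exact, commutes with iterated local cohomology. Hence $M = N^G$, and we obtain a splitting $N = M \oplus L$ of graded $R$-modules with $L = \ker \rho$. In particular $N_n = M_n \oplus L_n$ as $B$-modules, giving
\[
\mu_j(P, N_n) \;=\; \mu_j(P, M_n) \,+\, \mu_j(P, L_n) \qquad (n \in \Z).
\]
By Theorem \ref{ibass} applied individually to $M$ and $L$, each summand on the right is polynomial in $n$ for $n \gg 0$ (resp. $n \ll 0$) of degree at most $m-1$.

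\emph{Step 3 (transfer of degrees).} Let $Q$ be any prime of $A$ lying over $P$; then $A_Q$ is regular, hence Gorenstein. The analogue of the present theorem for $N$ (available from \cite{TP2}) says that the Bass polynomial of $N$ at $Q$ is either identically zero or of degree exactly $m-1$. A change-of-rings argument — using the Gorenstein hypothesis on $B_P$, the module-finiteness of $B_P \hookrightarrow A_P$, and $G$-averaging over the primes of $A$ lying above $P$ — relates the Bass numbers of $N_n$ at $Q$ to those of $M_n$ at $P$ in a way that preserves the maximal degree. Combined with the additivity in Step 2, this forces $g_M^{j,P}$ (resp.\ $f_M^{j,P}$) to be either identically zero or of degree exactly $m-1$.

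The main obstacle is the change-of-rings step, since $B \hookrightarrow A$ is finite but not generally flat and $G$ may permute the primes of $A$ lying over $P$. Carefully tracking the grading and the $G$-action — and, if necessary, exploiting the generalized Eulerian $D$-module structure on $M$ inherited from $N$ through $G$-invariance, combined with Matlis duality over the Gorenstein local ring $B_P$ on each graded component — should make the degree-preservation precise and complete the proof.
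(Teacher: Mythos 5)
Your Step 1 is a correct reduction: by Theorem \ref{ibass} the hypothesis $\mu_j(P,M_c)=0$ rules out case (ii)(b), and in the remaining nontrivial cases the whole content is that a not-identically-zero Bass polynomial must have degree exactly $m-1$. But that is precisely the point your argument does not establish. The splitting $N=M\oplus L$ in Step 2 gives $\mu_j(P,N_n)=\mu_j(P,M_n)+\mu_j(P,L_n)$ (as $B$-modules), yet this additivity cannot force the degree onto the $M$-summand: even if the left-hand side is eventually a polynomial of degree exactly $m-1$, that degree could be carried entirely by $L_n$. Moreover, you have no right to assert that $\mu_j(P,L_n)$ is eventually polynomial -- Theorems \ref{ibass} and \ref{growth} are proved for modules of the form $T(R)$, and $L=\ker\rho$ is not of that form. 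Finally, Step 2 concerns $B$-Bass numbers of $N_n$ while the result you want to import from \cite{TP2} concerns $A$- (or $S$-) Bass numbers at primes $Q$ over $P$; since $B\hookrightarrow A$ is finite but not flat, these are not interchangeable, and you explicitly leave the ``change-of-rings'' step that would bridge them as an unresolved obstacle. As written, the proposal is a correct identification of what must be proved together with a sketch whose decisive step is missing.

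The paper closes exactly this gap with an equality rather than an inequality or a sum: by Lemma \ref{inf} (after localizing so that $(B,\m)$ is Gorenstein local), $H^j_{\m A}(N_n)\cong\bigoplus_{l=1}^{r}E_A(A/\n_l)^{s_j(n)}$ with the \emph{same} multiplicity $s_j(n)$ at every $\n_l$ (because $G$ permutes the $\n_l$ transitively), and applying $(-)^G$ gives $H^j_{\m}(M_n)\cong H^g_{\m}(B)^{s_j(n)}\cong E_B(B/\m)^{s_j(n)}$. Combined with Lemma \ref{shiftbass} this yields the exact identity $\mu_j(\m,M_n)=s_j(n)=\mu_0(\n_l,V_n)$ where $V=H^j_{\m S}(N)$ is again a Lyubeznik-type functor applied to $S$. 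Hence $f_M^{j,\m}=f_V^{0,\n_l}$ and $g_M^{j,\m}=g_V^{0,\n_l}$ literally, and \cite[Theorem 1.12]{TP2} applied to $V$ gives ``zero or degree exactly $m-1$'' directly. If you want to salvage your outline, replace Steps 2--3 by this identification of the Bass numbers of $M_n$ with the zeroth Bass numbers of $(H^j_{\m S}(N))_n$ over $A$; without some such exact transfer, the degree statement does not follow.
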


\noindent
{\bf III.} ({\it Dimension of Supports and injective dimension:})
The support of a $B$-module $V$ is defined as \[\Supp_B V=\{P~|~V_P \neq 0 \text{ and } P \text{ is a prime in } B\}.\] By $\dim_B V$ we mean the dimension of $\Supp_B V$ as a subspace of $\Spec(B)$. Let $\injdim_B V$ denotes the injective dimension of $V$. We show the following:
\begin{theorem}[with hypothesis as in \ref{sa}]\label{g}
If $B$ is Gorenstein then the following hold:
\begin{enumerate}[\rm (i)]
\item $\injdim M_c \leq \dim M_c$ for all $c \in \Z$.
\item $\injdim M_n= \injdim M_{-m}$ for all $n \leq -m$.
\item $\injdim M_n= \injdim M_0$ for all $n \geq 0$.
\item If $m \geq 2$ and $-m<r, s<0$ then 
\begin{enumerate}[\rm (a)]
\item $\injdim M_n= \injdim M_{s}$.
\item $\injdim M_n\leq \min\{ \injdim M_{-m}, \injdim M_0\}$.
\end{enumerate}
\end{enumerate}
\end{theorem}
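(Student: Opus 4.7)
The plan is to adapt the proofs of the analogous results for $N = T'(S)$ over $A$ from \cite{TP2} to the present setting, exploiting the identification $M_n = N_n^G$ (which comes from exactness of $(-)^G$ in characteristic zero, applied iteratively to $G$-stable extensions of ideals). Two structural observations enable this transfer. First, since $G$ acts trivially on each $X_i$, the Euler derivation $\e = \sum_{i=1}^m X_i \partial_{X_i}$ is $G$-equivariant on $S$, and therefore descends to an endomorphism of $R = S^G$ and, inductively, of the composite local cohomology module $M$. Second, the Reynolds operator $\rho = |G|^{-1} \sum_{g \in G} g$ is $\e$-equivariant and splits the inclusion $M_n \hookrightarrow N_n$ as $B$-modules, so that $M_n$ is a $B$-direct summand of $N_n$ for each $n$.

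Using these facts I would first verify that $M$ inherits the generalized Eulerian structure from $N$: the nilpotency $(\e - n)^l N_n = 0$ restricts to the summand $M_n$. The multiplication maps $X_i \colon M_n \to M_{n+1}$, being $G$-equivariant, restrict from $N$ to $M$, so the short exact sequences of \cite{TP2} relating $N_n$ and $N_{n+1}$ as $A$-modules immediately yield analogous sequences of $B$-modules relating $M_n$ and $M_{n+1}$. Parts (ii), (iii), and (iv) then follow by running the corresponding arguments of \cite{TP2} in the $B$-module category: outside the window $-m \leq n \leq 0$ these sequences produce $B$-module isomorphisms (or at least $\injdim$-preserving comparisons), giving the constancy statements (ii) and (iii); inside the window, the Gorenstein hypothesis on $B$ --- via local duality and Bass-number manipulations as in Theorem \ref{ibass} --- yields both the equality in (iv)(a) and the inequality in (iv)(b).

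For part (i), I would localize at an arbitrary prime $P$ of $B$ and use local duality over the Gorenstein local ring $B_P$ to convert vanishing of the Bass numbers $\mu_j(P, M_c)$ into vanishing of the local cohomology groups $H^j_P((M_c)_P)$. These vanish above $\dim_{B_P}(M_c)_P$ by Grothendieck's vanishing theorem, and taking suprema over $P \in \Spec B$ gives $\injdim_B M_c \leq \dim_B M_c$.

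The main obstacle is verifying that the generalized Eulerian machinery of \cite{TP2} --- developed over the regular ring $A$ --- functions correctly on $B$-modules. Specifically, one must check that the key short exact sequences remain exact as sequences of $B$-modules (not merely as sequences of $A$-modules), and that the $\Ext^*_B$ computations over the Gorenstein ring $B$ admit the same kind of analysis as the $\Ext^*_A$ computations over $A$ did in \cite{TP2}. Once this verification is secured, the remaining arguments transfer with only routine modifications.
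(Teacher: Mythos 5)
Your proposal bypasses the two results the paper actually uses, and in both places the step you substitute is the one that needs proof. For parts (ii)--(iv) the paper does not transplant the generalized-Eulerian machinery of \cite{TP2} to $B$-modules at all: it simply observes that $\injdim M_n = \sup\{\,j : \mu_j(P,M_n)\neq 0 \text{ for some } P\,\}$ and reads everything off from Theorem \ref{ibass}, which says that for fixed $j$ and $P$ the non-vanishing set of $n\mapsto\mu_j(P,M_n)$ is one of $\emptyset$, $\Z$, $\Z_{\geq 0}$, $\Z_{\leq -m}$ (so constancy on $n\geq 0$ and on $n\leq -m$ is immediate, and for $-m<r<0$ only the ``nonzero for all $n$'' case contributes, giving (iv)). Your alternative --- restricting the Euler operator to $M$ and claiming the short exact sequences of \cite{TP2} ``produce $B$-module isomorphisms outside the window'' --- is not how \cite{TP2} itself proves constancy of injective dimension for $m\geq 2$: multiplication by $X_i$ need not be an isomorphism $M_n\to M_{n+1}$ even for $n\geq 0$, and the rigidity in \cite{TP2} lives at the level of Bass numbers, whose proof uses the $D$-module theory over the regular ring $S$ (injectivity and finiteness statements for $H^j_{\mathfrak{n}}(N_n)$). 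That theory is not available over $R=B[X_1,\ldots,X_m]$ with $B$ merely Gorenstein; the whole point of the paper's argument is to descend these facts from $N$ to $M=N^G$ via the identity $\mu_j(\mathfrak{m},M_n)=s_j(n)=\mu_0(\mathfrak{n}_l,H^j_{\mathfrak{m}A}(N_n))$ rather than to rerun the arguments over $B$.

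Part (i) has the same problem in miniature. ``Local duality over $B_P$'' does not convert vanishing of $\mu_j(P,M_c)$ into vanishing of $H^j_P((M_c)_P)$ when $M_c$ is not finitely generated. The correct mechanism is Lyubeznik's Lemma (\cite[1.4]{Lyu-1}, quoted as Lemma \ref{ilyu}): $\mu_j(P,M_c)=\mu_0(P,H^j_P(M_c))$, valid only under the hypothesis that $(H^j_P(M_c))_P$ is an injective $B$-module for all $j$. Establishing that injectivity is precisely Proposition \ref{inj}, which rests on Lemma \ref{inf}, i.e.\ on the $A*G$-module structure of $N_n$, the decomposition $H^j_{\mathfrak{m}A}(N_n)=\bigoplus_l E_A(A/\mathfrak{n}_l)^{\alpha_{lj}(n)}$ from \cite{TP2}, and the exactness of $(-)^G$. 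Your proposal never supplies this input, so the reduction to Grothendieck vanishing is unsupported as written. Once Proposition \ref{inj} and Theorem \ref{ibass} are in hand, the theorem follows in a few lines exactly as in the paper; without them, the transfer you describe does not go through.
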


We will also prove the following result.
\begin{theorem}[with hypotheses as in \ref{sa}]\label{ng}
Assume $m=1$. Let $P$ be a prime ideal in $B$ such that $B_P$ is not Gorenstein. Fix $n \in \Z$. Then EXACTLY one of the following holds:
\begin{enumerate}[\rm(i)]
\item $\mu_j(P, M_n)=0$ for all $j$.
\item there exists $c$ such that $\mu_j(P, M_n)=0$ for $j<c$ and $\mu_j(P, M_n)>0$ for all $j \geq c$.
\end{enumerate}
\end{theorem}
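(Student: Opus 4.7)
The plan is to localize at $P$ and reduce the statement to a no-gap principle for Bass numbers over a local Cohen-Macaulay non-Gorenstein ring. Since Bass numbers are local invariants, we may assume $(B, P, k)$ is itself a local Cohen-Macaulay ring that is not Gorenstein, and set $E := (M_n)_P$. The preceding theorem (finiteness of Bass numbers when $m = 1$) tells us that $\mu_j(P, E) = \dim_k \Ext^j_B(k, E)$ is finite for every $j \geq 0$. Alternative (i) corresponds exactly to the case in which every $\Ext^j_B(k, E)$ vanishes; if this fails, define $c = \min\{j : \mu_j(P, E) > 0\}$, and the task is reduced to showing $\mu_j(P, E) > 0$ for every $j \geq c$.

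The main input is the classical no-gap principle: over a local Cohen-Macaulay non-Gorenstein ring, a finitely generated module $F$ satisfies $\mu_j(P, F) > 0$ for all $j \geq \depth F$. The challenge is that $E$ is usually not finitely generated. I would bridge this gap by using the identification $M = T(R) = T'(S)^G$: because $|G|$ is a unit in $K$, the functor $(-)^G$ is exact and commutes with iterated local cohomology, and the Reynolds operator realises $M_n$ as a $B$-direct summand of $N_n$. Combined with the structural results of \cite{TP2} for $N_n$ over the regular ring $A$ (where every localisation is Gorenstein, so the analogue of Theorem \ref{ibass} is available), this should allow one to transport information from a finitely generated model module to conclude the no-gap property for $E$.

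The principal obstacle is precisely this passage from the finitely generated setting to modules with only finite Bass numbers. Matlis-duality arguments are delicate because $E$ need not be Artinian, so its dual need not be finitely generated over $\widehat{B_P}$. The core of the proof will therefore either be (a) a careful use of the Reynolds splitting, together with a change-of-rings analysis relating Bass numbers of $M_n$ over $B$ to $\Ext$ computations on $N_n$ viewed as an $A$-module, or (b) a direct contradiction argument: assume a hypothetical gap $\mu_c > 0, \ldots, \mu_{j-1} > 0, \mu_j = 0$ and extract from the structure of the minimal injective resolution of $E$, together with non-Gorensteinness of $B_P$, an impossible conclusion such as $\injdim_{B_P} B_P < \infty$. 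In either approach, the finiteness theorem for the $m=1$ case is the essential technical pillar that keeps the minimal injective resolution of $E$ tractable.
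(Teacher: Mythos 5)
Your setup is correct and matches the paper's: localize at $P$, invoke the $m=1$ finiteness theorem (Theorem \ref{bass1}) to know all $\mu_j(P,M_n)$ are finite, and set $c=\min\{j : \mu_j(P,M_n)>0\}$. But from that point on you only list two candidate strategies without executing either, and the essential idea that makes the theorem true is absent from both. The key input is Lemma \ref{inf}: after localizing and completing, $H^j_\m(M_n)\cong H^d_\m(B)^{s_j(n)}$ with $s_j(n)$ \emph{finite}, i.e.\ every local cohomology module of $M_n$ at $\m$ is a finite direct sum of copies of the top local cohomology of $B$, whose Matlis dual is the canonical module $\omega$ of the completion $T$. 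The paper's argument is then a positive induction, not a contradiction: writing $\EE=\Gamma_\m(\GG)$ for the $\m$-torsion part of the minimal injective resolution, one has $\EE^j=E^{r_j}$ with $r_j=\mu_j(\m,M_n)$ finite, so all kernels $Z^j$ and images $B^j$ are Artinian and Matlis duality turns the complex into one of finite free $T$-modules. Starting from $Z^c\cong H^d_\m(B)^{s_c(n)}$, whose dual $\omega^{s_c(n)}$ is a non-free maximal Cohen--Macaulay $T$-module precisely because $T$ is not Gorenstein, one propagates the property ``$(Z^j)^\vee$ is non-free MCM'' up the resolution using the Auslander--Buchsbaum formula and the splitting $\Ext^1_T(N,\omega)=0$ for $N$ MCM; non-freeness forces $Z^j\neq 0$, hence $r_j>0$, at every stage. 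Nothing in your proposal produces this inductive invariant.

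Your alternative (a) is moreover pointed in an unpromising direction: the primes of $A$ (and of $S$) lying over $P$ have regular, hence Gorenstein, localizations, so the Bass-number pattern of $N_n$ over $A$ is governed by the Gorenstein-case results and cannot by itself detect the non-Gorenstein phenomenon you need (infinitely many nonzero Bass numbers of $M_n$ over $B$). The Reynolds splitting and the passage through $N_n$ are indeed used --- but only inside Lemma \ref{inf} and Theorem \ref{bass1}, to establish the isomorphism $H^j_\m(M_n)\cong H^d_\m(B)^{s_j(n)}$ and the finiteness of $s_j(n)$; the no-gap conclusion itself is extracted from the canonical module of $B$, not from $A$. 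The ``classical no-gap principle'' you cite for finitely generated modules is also not applied anywhere in the actual proof, so the ``bridge from the finitely generated setting'' you identify as the main obstacle is not the route taken: the correct bridge is Matlis duality applied to the Artinian complex $\EE$, which is available exactly because the $r_j$ and $s_j(n)$ are finite.
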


Let $M_n \neq 0$ and $\injdim_B M_n< \infty$ for some $n$. It should be noted that if $\mu_j(P, M_n) \neq 0$, then $B_P$ is Gorenstein.

\vspace{0.15cm}
\noindent
{\bf IV.} ({\it Associate primes:})
A prime ideal $P$ is associated to $E$ if there is some element $x$ of $E$ such that $\ann(x) =P$. The set of all such prime ideals is denoted by $\Ass(E)$. In this paper, we investigate finiteness and the asymptotic behavior of the set of primes associated to the $B$-module $M_n$. We prove the following: 

\begin{theorem}[with hypothesis as in \ref{sa}]
Further assume that either $A$ is local or a smooth affine algebra over a field $K$ of characteristic zero. Then $\bigcup_{n \in \Z}\Ass_B M_n$ is a finite set. 
	
Moreover, if $B$ is Gorenstein then
\begin{enumerate}[\rm(1)]
\item $\Ass_B M_n= \Ass_B M_{-m}$ for all $n \leq -m$.
\item $\Ass_B M_n= \Ass_B M_0$ for all $n \geq 0$.
\end{enumerate}
\end{theorem}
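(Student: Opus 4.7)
The plan is to deduce this theorem from the analogous result for $N=T'(S)$ established in \cite{TP2} together with Theorem \ref{ibass}. First, I would verify the identification $M_n = N_n^G$. Choosing homogeneous generators of each $I_j$ (which lie in $R \subseteq S$), one can compute both $M$ and $N$ by iterated \v{C}ech complexes. The \v{C}ech complex over $S$ is $G$-equivariant with $G$-invariants the \v{C}ech complex over $R$; since $|G|$ is invertible in $K$, the $G$-invariants functor is exact, giving $H^{i_j}_{I_j S}(\,\cdot\,)^G = H^{i_j}_{I_j}((\,\cdot\,)^G)$ at each stage. Iterating yields $M = N^G$, and in particular $M_n = N_n^G$ for every $n$. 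The Reynolds operator $\rho = \tfrac{1}{|G|}\sum_{\sigma \in G} \sigma$ provides a $B$-linear retraction of the inclusion $M_n \hookrightarrow N_n$, so $M_n$ is a $B$-direct summand of $N_n$; in particular $\Ass_B M_n \subseteq \Ass_B N_n$.

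Since $G$ is finite, $A$ is module-finite, hence integral, over $B$, so by the standard fact for integral extensions $\Ass_B N_n = \{\fp \cap B : \fp \in \Ass_A N_n\}$. The analog of the present theorem for $N$ (proved in \cite{TP2} under exactly the hypothesis that $A$ is local or smooth affine over $K$) asserts that $\bigcup_n \Ass_A N_n$ is finite. Contracting to $B$ and using $\Ass_B M_n \subseteq \Ass_B N_n$ then yields the finiteness of $\bigcup_{n\in\Z} \Ass_B M_n$.

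For the \emph{moreover} part, assume $B$ is Gorenstein; then $B_P$ is Gorenstein for every prime $P$, so Theorem \ref{ibass} applies with $j=0$. Fix $P \in \bigcup_n \Ass_B M_n$ and recall that $P \in \Ass_B M_n$ if and only if $\mu_0(P, M_n) \neq 0$. Running through the cases of Theorem \ref{ibass}: in cases (i), (ii)(a), and (ii)(b) the indicator $n \mapsto [P \in \Ass_B M_n]$ is constant on $\Z$; in case (ii)(c) it equals $0$ for $n<0$ and $1$ for $n \geq 0$; in case (ii)(d) it equals $1$ for $n \leq -m$ and $0$ for $n > -m$. In every case this indicator is constant on $\{n \leq -m\}$ with value matching its value at $n = -m$, and constant on $\{n \geq 0\}$ with value matching its value at $n=0$. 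Since $\bigcup_n \Ass_B M_n$ is finite, running this over the finitely many such $P$ yields $\Ass_B M_n = \Ass_B M_{-m}$ for $n \leq -m$ and $\Ass_B M_n = \Ass_B M_0$ for $n \geq 0$.

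The main work is hidden in the two inputs invoked: the finiteness theorem for $N$ in \cite{TP2} (which uses the structure of $N$ as a generalized Eulerian module over the Weyl algebra $A\langle X_1,\ldots,X_m,\partial_1,\ldots,\partial_m\rangle$ and requires the local or smooth affine hypothesis on $A$) and Theorem \ref{ibass}, whose case analysis is precisely what drives the stabilization of associated primes. Granted these two inputs, the proof above is essentially formal, resting only on the Reynolds-operator splitting $M_n \hookrightarrow N_n \twoheadrightarrow M_n$ and the integrality of $A$ over $B$.
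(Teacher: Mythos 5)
Your proof is correct, and its overall strategy --- reduce to $N=T'(S)$ via the Reynolds splitting $M_n=N_n^G\subseteq N_n$, import finiteness from \cite{TP2}, and drive the stabilization with the trichotomy of Theorem \ref{ibass} applied to $\mu_0(P,-)$ --- is the same as the paper's; indeed the \emph{moreover} part of your argument is identical to the paper's. Where you genuinely diverge is the finiteness step. The paper works with the total module: it invokes Lyubeznik's theorem to get $\Ass_S T'(S)$ finite, uses the Reynolds splitting $S=R\oplus L$ to deduce $\Ass_R T(R)$ finite, and then descends along $B\to R$ via \cite[Proposition 12.1]{TP2}; in the local case it must first pass to the homogeneous maximal ideal of $R$ and use homogeneity of associated primes before Lyubeznik's local theorem applies. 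You instead argue componentwise: $\Ass_B M_n\subseteq \Ass_B N_n=\{\fp\cap B:\fp\in\Ass_A N_n\}$, the equality coming from the fact that $B\subseteq A$ is an integral (module-finite) extension of Noetherian rings, and you then cite the componentwise finiteness of $\bigcup_n\Ass_A N_n$ from \cite{TP2}. This is legitimate and arguably cleaner --- it handles the local and smooth affine cases uniformly and avoids the localization detour --- but you should make two inputs explicit: the contraction formula for $\Ass$ along $B\subseteq A$ must be justified for modules that are \emph{not} finitely generated (it does hold for integral extensions with $A$ Noetherian: localize at $P$ and note that $A_P/PA_P$ is Artinian, so the cyclic module $(Ax)_P$ has an associated prime lying over $P$), and the theorem you quote from \cite{TP2} must indeed be its graded-componentwise statement for $\Ass_A N_n$ rather than Lyubeznik's statement about $\Ass_S N$.
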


\noindent
{\bf V.} ({\it Infinite generation:})
Let $R=\bigoplus_{n \geq0}R_n$ be a positively graded ring and $M$ be a finitely generated graded $R$-module. Then by \cite[Theorem 16.1.5]{BS}, the $R_0$-module $H^i_{R_+}(M)_n$ is finitely generated  for all $i \in \N_0$ and $n \in \Z$.
Now by \cite[Theorem 1.7]{TP2}, we have one sufficient condition for infinite generation of a component of graded local cohomology module over $R$. In this paper, we give another one as follows. 
\begin{theorem}[with hypotheses as in \ref{sa}]
Let $J$ be a homogeneous ideal in $R$ such that $J \cap B \neq 0$. If $B$ is Gorenstein and $H_J^i(R)_c \neq 0$ then $H_J^i(R)_c$ is NOT finitely generated as a $B$-module.
\end{theorem}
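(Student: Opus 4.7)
The plan is to reduce to \cite[Theorem 1.7]{TP2}---which is the same statement with $(R,B)$ replaced by $(S,A)$---by exploiting the Reynolds operator $\rho = \frac{1}{|G|}\sum_{g\in G} g : S \to R$, available since $|G|$ is invertible in characteristic zero.

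First, I would establish the graded identification $H_J^i(R)_c = H_{JS}^i(S)_c^G$ for every $c\in\Z$. Choose generators $f_1,\ldots,f_k$ of $J$ lying in $R = S^G$; they also generate $JS$. Each term $S_{f_{j_1}\cdots f_{j_p}}$ of the \v{C}ech complex $\check{C}(JS;S)$ inherits a $G$-action, and because the localizing elements are $G$-fixed, $(S_{f_{j_1}\cdots f_{j_p}})^G = R_{f_{j_1}\cdots f_{j_p}}$; hence $(\check{C}(JS;S))^G = \check{C}(J;R)$. Since $(-)^G$ is exact in characteristic zero, taking cohomology yields $H_{JS}^i(S)^G = H_J^i(R)$ in every degree.

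Next, transfer the hypothesis: any nonzero $b \in J\cap B$ also lies in $A\cap JS$, so $JS\cap A\neq 0$, which is precisely the hypothesis needed to invoke \cite[Theorem 1.7]{TP2}. That result yields that $H_{JS}^i(S)_c$ is either zero or not finitely generated as an $A$-module; since $A$ is module-finite over $B$, the same dichotomy holds over $B$. From $H_J^i(R)_c \neq 0$ and the identification above we get $H_{JS}^i(S)_c \neq 0$, and hence $H_{JS}^i(S)_c$ is not f.g.\ over $B$.

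The main obstacle is then to pass non-finite-generation from $H_{JS}^i(S)_c$ to its $G$-invariant submodule $H_J^i(R)_c$. I would argue by contradiction: assume $H_J^i(R)_c$ is f.g.\ over $B$, and show this forces $H_{JS}^i(S)_c$ to be f.g.\ over $B$, contradicting the previous paragraph. The natural tool is the $B$-linear multiplication $A \otimes_B H_J^i(R)_c \to H_{JS}^i(S)_c$: when $A$ is $B$-flat, flat base change of the \v{C}ech complex gives $H_{JS}^i(S) = H_J^i(R)\otimes_B A$, making this map an isomorphism, so that finite generation of the source (combined with $A$ being f.g.\ over $B$) immediately transfers to the target. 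In general $A$ is not $B$-flat and this is the main technical hurdle; I would leverage the Gorenstein hypothesis on $B$ together with the $D$-module/generalized Eulerian structure developed in \cite{TP2}, propagated through the $G$-isotypic decomposition $H_{JS}^i(S)_c = \bigoplus_\chi V^{(\chi)}$ with $V^{(\chi_0)} = H_J^i(R)_c$, to establish the surjectivity $V^{(\chi)} = A^{(\chi)}\cdot V^{(\chi_0)}$ in the single degree $c$ and thus close the contradiction.
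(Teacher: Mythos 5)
Your reduction to $S$ is fine as far as it goes: the identification $H_J^i(R)_c=\bigl(H_{JS}^i(S)_c\bigr)^G$ is Proposition \ref{inva} of the paper, and $JS\cap A\neq 0$ does put you in the situation of \cite[Theorem 1.7]{TP2}, so $N_c:=H_{JS}^i(S)_c$ is not finitely generated. But the proof then stops exactly at the point that carries all the difficulty. The invariant part $M_c=N_c^G$ is a $B$-direct summand of $N_c$ (via the Reynolds operator), and a direct summand of a non-finitely-generated module can perfectly well be finitely generated, so nothing transfers for free. The surjectivity you hope for, $N_c=A\cdot N_c^G$, is false for general $A*G$-modules: since $A/B$ is not \'etale, $A*G$ is not $\operatorname{End}_B(A)$ and $(-)^G$ is not an equivalence; already for $A=k[x]$, $G=\Z/2$ acting by $x\mapsto -x$, the module $V=k_{\mathrm{triv}}\oplus k_{\mathrm{sign}}$ supported at the ramified point has $A\cdot V^G=k_{\mathrm{triv}}\subsetneq V$. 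Moreover the ``$G$-isotypic decomposition $N_c=\bigoplus_\chi V^{(\chi)}$ with $V^{(\chi)}=A^{(\chi)}\cdot V^{(\chi_0)}$'' does not make sense as written, because $G$ does not act $A$-linearly on $N_c$ (it acts on $A$ as well), so there is no $A$-module isotypic decomposition and no candidate ``$A^{(\chi)}$'' to multiply by. You have correctly located the obstacle but not overcome it, and the Gorenstein hypothesis enters nowhere in your sketch in a way that would close it.

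For comparison, the paper argues in the opposite direction and never needs to transfer infinite generation across $(-)^G$. It assumes $M_c\neq 0$ is finitely generated over the (local) Gorenstein ring $B$ and derives a contradiction internally: by Theorem \ref{idsd} (already established via the comparison with $S$), $\injdim_B M_c\leq\dim M_c<\infty$; Bass's theorem \cite[Theorem 3.1.17]{BH} for finitely generated modules of finite injective dimension then forces $\dim M_c=\dim B$ and $M_c$ maximal Cohen--Macaulay, hence torsion-free over the domain $B$. But $H_J^i(R)$ is $J$-torsion, so any $0\neq a\in J\cap B$ kills a nonzero element of $M_c$ while being $B$-regular --- a contradiction. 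If you want to salvage your approach you would need an actual argument that finite generation of $N_c^G$ over $B$ forces finite generation of $N_c$ over $A$; absent that, the proof is incomplete.
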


We begin Section 2 with some basic properties of skew group rings and the action of $G$ on the graded components of $N$ that we need later on. In Section 3, we discuss for a fixed $j$, how Bass numbers $\mu_j(P, M_n)$ relate with each other for all $n$ when $P$ is a prime ideal in $B$ such that $B_P$ is Gorenstein. In Section 4, we study the behavior of the function $n \mapsto \mu_j(P, M_n)$ as $n \to \infty$ and as $n \to - \infty$. In Section 5, we talk about Bass numbers of $M_n$ when $B_P$ is NOT Gorenstein and $m=1$. In Sections 6 and 7, we study finiteness of injective dimensions and associated primes respectively and establish relations between injective dimensions and associated primes of graded components under certain conditions. Finally in Section 8, we give a sufficient condition under which $M_n$ is not finitely generated as a $B$-module.

\section{Skew Group Rings and Graded Local Cohomology}
\s {\it Recall:} Let $A$ be a ring (not necessarily commutative) and $G$ is a finite subgroup of $Aut(A)$ and $|G|$ is invertible in $A$. 

The skew-group ring of $A$ (\wrt~ $G$) is
\[A*G = \{ \sum_{\sigma \in G} a_\sigma \sigma \mid a_{\sigma} \in A \ \text{for all} \ \sigma \},\]
with multiplication defined as
\[
(a_{\sigma}\sigma)(a_\tau \tau) = a_{\sigma}\sigma(a_{\tau})\sigma \tau.
\]  
An $A*G$ module $M$ is an $A$-module on which $G$ acts such that for all $\sigma \in G$,
\[
\sigma(am) = \sigma(a)\sigma(m) \quad \text{for all} \ a \in A \ \text{and} \ m \in M.
\]

\begin{definition}
Let $M$ be an $A*G$-module. Then
\[M^G = \{ m \in M \mid \sigma(m) = m \ \text{for all} \ \sigma \in G \}.\]
\end{definition}
Let us set $A^G$ to be the ring of invariants of $G$. Let $M, N$ be $A*G$-modules. It can be easily checked that 
\begin{enumerate}[\rm (1)]
\item $M^G$ is an $A^G$-module.
\item If $u \colon M \rt N$ is $A*G$-linear then $u(M^G) \subseteq N^G$ and the restriction map $\widetilde{u} \colon M^G \rt N^G$ is $A^G$-linear. Thus we have a functor \[(-)^G \colon Mod(A*G) \rt Mod(A^G).\]
\item $(-)^G = \Hom_{A*G}(A,-)$ and hence it is left exact.
\end{enumerate}

\s{\it Reynolds operator:}
For any $A*G$-module $M$ define the Reynolds operator 
\begin{align*}
\rho^M \colon M &\rt M^G \\
m &\mapsto \frac{1}{|G|}\sum_{\sigma \in G} \sigma m.
\end{align*}
Then clearly $\rho^M$ is $A^G$-linear and $\rho^M(m)=m$ for all $m \in M^G$.

Since $|G|$ is invertible in $A$ so Reynold operator exists. Therefore by \cite[Lemma 3.5]{TP1} it follows that $(-)^G$ is an exact functor.

\s Let $A$ be a commutative Noetherian ring and $G$ be a finite subgroup of the group of automorphisms of $A$ with $|G|$ invertible in $A$. Let $B=A^G$ be the ring of invariants of $G$. Let $S= A[X_1, \ldots, X_m]$ and $R=B[X_1, \ldots, X_m]$. Let $G$ acts on $A$ and fixes $X_i$. Then clearly $R=S^G$. Set $M=T(R)= \bigoplus_{n \in \Z} M_n$ where $T(-) = H^{i_1}_{I_1}(H^{i_2}_{I_2}(\cdots H^{i_r}_{I_r}(-) \cdots)$  for some ideals $I_1, \ldots, I_r$ in $R$ and $i_1,\ldots,i_r \geq 0$. Set $N=T'(S)= H^{i_1}_{I_1S}(H^{i_2}_{I_2S}(\cdots H^{i_r}_{I_rS}(S) \cdots)= \bigoplus_{n \in \Z} N_n$. Then by \cite[Corollary 4.3]{TP1} we get that $T'(S)$ is a $S*G$-module and $T'(S)^G= T(R)$.

All of our results depend on the following statement.
\begin{proposition}\label{inva}
$N_n$ is an $A*G$-module and $N_n^G=M_n$ for all $n$.
\end{proposition}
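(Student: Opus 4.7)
The plan is to show that the $\Z$-grading on $N = T'(S)$ is preserved by the $G$-action, so that the $S*G$-structure restricts to an $A*G$-structure on each $N_n$ and taking invariants commutes with taking graded components.

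First I would record the graded picture. The ring $S = A[X_1,\ldots,X_m]$ is standard $\Z$-graded with $A = S_0$, and every $\sigma \in G$ fixes each $X_i$ and preserves $A$, so $\sigma$ is a degree-preserving automorphism of $S$. Since each $I_j$ lies in $R = S^G$, the extended ideal $I_j S$ is both $G$-stable and homogeneous.

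Next I would prove the following stability statement: for any $G$-stable homogeneous ideal $J \subseteq S$ and any graded $S*G$-module $E$ whose $G$-action is degree-preserving, $H^i_J(E)$ is again a graded $S*G$-module with degree-preserving $G$-action. The most concrete way is via the \v{C}ech complex on a homogeneous generating set $f_1,\ldots,f_s$ of $J$: each $\sigma \in G$ carries this complex to the \v{C}ech complex for $\sigma(f_1),\ldots,\sigma(f_s)$ in a degree-preserving fashion, because each $\sigma(f_k)$ is homogeneous of the same degree and lies in $J$; independence of \v{C}ech cohomology from the choice of generators then promotes this to a canonical degree-preserving $G$-action on $H^i_J(E)$. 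Iterating the argument through $i_r, i_{r-1}, \ldots, i_1$ equips $N = T'(S)$ with a graded $S*G$-structure whose $G$-action respects the grading. In particular each piece $N_n$ is stable under $A \subseteq S_0$ and under $G$, and the $S*G$-compatibility $\sigma(sm) = \sigma(s)\sigma(m)$ restricts to the required $A*G$-compatibility on $N_n$.

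Finally, because the $G$-action preserves the grading, taking invariants commutes with the direct sum decomposition, giving $N^G = \bigoplus_{n \in \Z} N_n^G$. By \cite[Corollary 4.3]{TP1} we have $N^G = T'(S)^G = T(R) = M = \bigoplus_{n \in \Z} M_n$, and comparing degree-$n$ components yields $N_n^G = M_n$. The main obstacle, I expect, is the middle step: checking that the abstract $G$-action supplied by \cite[Corollary 4.3]{TP1} on the iterated local cohomology $T'(S)$ really is the degree-preserving one coming from the \v{C}ech construction at each stage, so that grading-compatibility is maintained throughout the composition of functors. Once that is in place, the rest of the argument is formal.
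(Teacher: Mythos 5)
Your proposal is correct and follows essentially the same route as the paper: both take the $S*G$-structure on $N=T'(S)$ from \cite[Corollary 4.3]{TP1}, observe that the $G$-action preserves the grading because $G$ fixes the $X_i$, restrict to each $N_n$ to get the $A*G$-structure, and identify $N_n^G$ with $M_n$ via compatibility of invariants with the graded decomposition (the paper phrases this last step as the Reynolds operator $\rho^N$ being a degree-zero map). The only difference is one of detail: where the paper simply asserts ``$G$ acts linearly on $S$, thus $\sigma(N_n)\subseteq N_n$,'' you supply the \v{C}ech-complex justification and explicitly flag the compatibility of that construction with the abstract action of \cite[Corollary 4.3]{TP1}.
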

\begin{proof}
As $N$ is a graded $S$-module we have $N_n$ is a $S_0=A$-module. We also have $N=T'(S)$ is a $S*G$-module. Now $A \subseteq S$ is a sub-ring. So $N$ is an $A*G$-module. Notice $G$ acts linearly on $S$ (fixes $X_i$). Thus $\sigma(N_n) \subseteq N_n$ for all $\sigma \in G$. Moreover, $N_n \subseteq_A N$ and hence $N_n$ is an $A*G$-module. Clearly $\rho^N: N \to N^G$ is a degree zero $R$-module homomorphism. It follows that $N_n^G=M_n$.
\end{proof}


\section{Bass numbers}
\noindent
\s\label{bs}{\it Setup:} Let $A$ be a regular domain containing a field of characteristic zero. Let $G$ be a finite subgroup of the group of automorphisms of $A$. Let $B=A^G$ be the ring of invariants of $G$. Let $S= A[X_1,\ldots, X_m]$ and $R= B[X_1,\ldots, X_m]$ be standard graded with $\deg A=0$, $\deg B=0$ and $\deg X_i=1$ for all $i$. 

Set $M=T(R)= \bigoplus_{n \in \Z} M_n$ where $T(-) = H^{i_1}_{I_1}(H^{i_2}_{I_2}(\cdots H^{i_r}_{I_r}(-) \cdots)$  for some ideals $I_1, \ldots, I_r$ in $R$ and $i_1,\ldots,i_r \geq 0$ and $T'(S)= \bigoplus_{n \in \Z}N_n$ where $T'(-)= H^{i_1}_{I_1S}(H^{i_2}_{I_2S}(\cdots H^{i_r}_{I_rS}(-) \cdots)$.

Note that $\mu_j(P, M)=\mu_j(PR_P, M_P)$. So by \cite[Lemma 4.1, Lemma 4.4]{TP1}, it is enough to prove any result of Bass numbers only for maximal ideals after localizing. In this section we will take $(B, \m)$ is a local ring with $\hgt \m = d$. Set $E = E_{B}(B/\m)$ and  $l = B/\m$. Let $\n_1, \n_2, \ldots, \n_r$ be {\it all} the maximal ideals of $A$ lying over $\m$. Since $A$ is a normal domain and $N_n$ is an $A*G$-module (by Proposition \ref{inva}) so by \cite[Theorem 6.1]{TP1} we get $H_{\m A}^j(N_n)= \bigoplus_{l=1}^r H_{\n_l}^j(N_n)$.

\begin{lemma}\label{inf}
Let $\hgt P=g$. Then \[\left(H_P^j(N_n^G)\right)_P = H_{PB_P}^g(B_P)^{s_j(n)} \quad \text{for some } s_j(n) \geq0.\] Here $s_j$ is some cardinal {\rm(}possibly infinite{\rm)}.
\end{lemma}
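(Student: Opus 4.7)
My plan is to reduce the claim to an analogous statement for $N_n$ over the regular ring $A$, which is presumably proven in \cite{TP2}, by passing through the skew-group-ring framework. First, since $(-)^G$ is exact (as $|G|$ is invertible), commutes with localization at $G$-fixed multiplicative sets such as $B \setminus P$, and commutes with the \v{C}ech complex computing local cohomology with respect to ideals of $B$, I combine this with Proposition \ref{inva} ($N_n^G = M_n$) to obtain
\[
\bigl(H_P^j(M_n)\bigr)_P \;=\; \bigl(H^j_{PA_P}((N_n)_P)\bigr)^G,
\]
where $A_P := A \otimes_B B_P$ inherits a $G$-action with $(A_P)^G = B_P$.

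Next, because $G$ is finite, $A$ is module-finite over $B$, so $A_P$ is semi-local with maximal ideals $Q_1 A_P, \ldots, Q_s A_P$, where $Q_1, \ldots, Q_s$ are the primes of $A$ lying over $P$, all of height $g$ (integral extensions preserve Krull dimension, and the $Q_i$ are $G$-conjugate hence have equal height). Since the $Q_i A_P$ are pairwise comaximal and $\sqrt{PA_P} = \bigcap_i Q_i A_P$, iterated Mayer--Vietoris gives
\[
H^j_{PA_P}((N_n)_P) \;=\; \bigoplus_{i=1}^{s} H^j_{Q_i A_{Q_i}}((N_n)_{Q_i}).
\]
At each $Q_i$, the ring $A_{Q_i}$ is regular local of dimension $g$, so the analog of this lemma established in \cite{TP2} for the $S$-case applies and yields $H^j_{Q_i A_{Q_i}}((N_n)_{Q_i}) \cong E_{A_{Q_i}}(k(Q_i))^{t_j(n)}$, with cardinal $t_j(n)$ independent of $i$ by $G$-conjugacy, and with $E_{A_{Q_i}}(k(Q_i)) \cong H^g_{Q_i A_{Q_i}}(A_{Q_i})$.

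Finally, I apply the same Mayer--Vietoris decomposition to $A$ itself to obtain $H^g_{PA_P}(A_P) = \bigoplus_i E_{A_{Q_i}}(k(Q_i))$, and since $(A_P)^G = B_P$ and $(-)^G$ commutes with local cohomology, this identifies
\[
H^g_{PB_P}(B_P) \;=\; \bigl(H^g_{PA_P}(A_P)\bigr)^G \;=\; \Bigl(\bigoplus_{i=1}^{s} E_{A_{Q_i}}(k(Q_i))\Bigr)^G.
\]
From this identification, I deduce that taking $G$-invariants of $\bigoplus_i E_{A_{Q_i}}(k(Q_i))^{t_j(n)}$ (with its permutation-type $G$-action) produces a direct sum of copies of $H^g_{PB_P}(B_P)$, giving the required cardinal $s_j(n)$.

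The main obstacle will be the $G$-equivariance in the third step: the isomorphisms $H^j_{Q_i A_{Q_i}}((N_n)_{Q_i}) \cong E_{A_{Q_i}}(k(Q_i))^{t_j(n)}$ produced by the \cite{TP2} analog must be chosen coherently so that the direct sum carries the permutation $G$-action induced by the transitive action on $\{Q_1,\ldots,Q_s\}$, and the stabilizer $H$ of $Q_1$ acts compatibly on the multiplicity space. Granted naturality of the constructions in \cite{TP2}, this reduces the $G$-invariants computation to the already established identification $H^g_{PB_P}(B_P) = (\bigoplus_i E_{A_{Q_i}}(k(Q_i)))^G$, yielding the stated cardinal $s_j(n)$.
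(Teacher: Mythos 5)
Your proposal is correct and follows essentially the same route as the paper: localize to reduce to the (semi)local situation, decompose $H^j_{PA}(N_n)$ as a direct sum over the finitely many primes of $A$ lying over $P$, identify each summand with copies of $E_{A}(A/Q_i)\cong H^g_{Q_iA_{Q_i}}(A_{Q_i})$ via the results of \cite{TP2}, use the transitive $G$-action on the $Q_i$ to equalize the multiplicities, and then apply the exact functor $(-)^G$. The equivariance issue you flag at the end is real but is treated in the paper exactly as you propose, by invoking \cite[Theorem 6.1]{TP1} to make the decomposition one of $A*G$-modules before taking invariants.
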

\begin{proof}
It suffices to prove this result only for maximal ideal considering $(B, \m)$ is a local ring. Clearly $\dim B =g$. We have $H_{\m A}^j(N_n)= \bigoplus_{l=1}^r H_{\n_l}^j(N_n)$. By \cite[Proposition 7.3]{TP1} $H_{\n_l}^j(N_n)= (H_{\n_l}^j(N_n))_{\n_l}$. Now from the proof of \cite[Proposition 9.4]{TP2} we have $H_{\n_l}^j(N_n)$ is an injective module and $H_{\n_l}^j(N_n)\cong E_A(A/\n_l)^{\alpha_{lj}(n)}$ where $\alpha_{lj}(n)$ is some cardinal possibly infinite. Thus $H_{\m A}^j(N_n)= \bigoplus_{l=1}^r E_A(A/\n_l)^{\alpha_{lj}(n)}$. Since $A_{n_l}$ is a Gorenstein local ring so we have \[H_{\n_l A}^g(A) = (H_{\n_l A}^g(A))_{\n_l}= H_{\n_l A_{\n_l}}^g(A_{\n_l})\cong E_A(A/\n_l).\] Thus we get $H_{\m A}^j(N_n)= \bigoplus_{l=1}^r H_{\n_l A}^g(A)^{\alpha_{lj}(n)}$. Moreover, by \cite[Theorem 6.1]{TP1} we have $\sigma_l^k(H_{\n_l}^j(N_n)) = H_{\n_k}^j(N_n)$, where $\sigma_l^k(\n_l)= \n_k$ for all $l, k$. Therefore $\alpha_{1j}(n)= \cdots= \alpha_{rj}(n)= s_j(n)$ (say). Thus we have 
\begin{equation}\label{be1}
H_{\m A}^j(N_n)= \left(\bigoplus_{l=1}^r H_{\n_l A}^g(A)\right)^{s_j(n)} \cong H_{\m A}^g(A)^{s_j(n)}
\end{equation}
of $A*G$-modules. Again by Proposition \ref{inva} we have $M_n= N_n^G$. Now applying $(-)^G$ on both sides of \eqref{be1} and by \cite[Corollary 4.3(2)]{TP1}, we get $H^j_{\m}(N_n^G) \cong H^g_{\m}(A^G)^{s_j(n)}= H^g_{\m}(B)^{s_j(n)}.$
\end{proof}
 As an application of the above Lemma we prove the following.
\begin{proposition}\label{inj}
Let $P$ be a prime ideal in $B$ such that $B_P$ is Gorenstein. Set $V= M_n$. Then $(H_P^j(V))_P$ is injective $B$-module for all $j \geq 0$.
\end{proposition}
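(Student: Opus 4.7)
The plan is to apply Lemma \ref{inf} to reduce the statement to an injectivity property of top local cohomology over the Gorenstein local ring $B_P$, and then to invoke Bass's theorem on direct sums of injectives. First I would note that $V = M_n = N_n^G$ by Proposition \ref{inva}, so Lemma \ref{inf} yields an isomorphism
\[
(H_P^j(V))_P \cong H_{PB_P}^g(B_P)^{s_j(n)}
\]
where $g = \hgt P = \dim B_P$ and $s_j(n)$ is a (possibly infinite) cardinal.

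The Gorenstein hypothesis then enters by identifying the top local cohomology module on the right. Since $B_P$ is Gorenstein local of dimension $g$, its canonical module is $B_P$ itself, so local duality (equivalently, Matlis duality for the canonical module in the Gorenstein case) gives $H_{PB_P}^g(B_P) \cong E_{B_P}(B_P/PB_P)$. Under the canonical identification $E_{B_P}(B_P/PB_P) \cong E_B(B/P)$, this is the injective hull of $B/P$ over $B$, and in particular is injective as a $B$-module.

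To finish, I would invoke Bass's theorem: since $B$ is Noetherian (in fact Cohen-Macaulay by the Note in Section 1), arbitrary direct sums of injective $B$-modules remain injective. Hence
\[
(H_P^j(V))_P \cong E_B(B/P)^{s_j(n)}
\]
is an injective $B$-module, as required.

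The main technical point is to confirm that the isomorphism supplied by Lemma \ref{inf} is one of $B$-modules (and not merely of underlying abelian groups or $B_P$-modules in a weaker sense), so that the decomposition as a direct sum of copies of $E_B(B/P)$ holds in the category of $B$-modules where injectivity is being asserted. This is not a serious obstacle because the construction in Lemma \ref{inf} descends from the $A*G$-module decomposition of $H_{\m A}^j(N_n)$ via the exact functor $(-)^G$, and the resulting decomposition is naturally one of $B$-modules. Beyond this bookkeeping the argument is a straightforward application of Gorensteinness and Bass's theorem.
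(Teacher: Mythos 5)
Your proposal is correct and matches the paper's proof essentially verbatim: both apply Lemma \ref{inf}, identify $H_{PB_P}^g(B_P)$ with $E_B(B/P)$ via the Gorenstein hypothesis, and conclude injectivity of the (possibly infinite) direct sum over the Noetherian ring $B$. Your extra remarks on Bass's theorem and on the isomorphism being one of $B$-modules only make explicit what the paper leaves implicit.
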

\begin{proof}
Since for any prime ideal $P$ in $B$, $(B_P, PB_P)$ is a Gorenstein local ring so we have $H_{PB_P}^g(B_P) \cong E_B(B/P)$ where $\hgt P=g$. Thus by Lemma \ref{inf} we get $(H_P^j(V))_P \cong E_B(B/P)^{s_j(n)}$ for some $s_j(n) \geq 0$ and hence it is injective.
\end{proof}

We need the following lemma from \cite[1.4]{Lyu-1}.
\begin{lemma}\label{ilyu}
Let $A$ be a Noetherian ring and let $M$ be an $A$-module {\rm(}$M$ need not be finitely generated{\rm)}. Let $P$ be a prime ideal in $A$. If $(H_P^j(M))_P$ is injective $A$-module for all $j \geq 0$ then $\mu_j(P, M)= \mu_0(P, H_P^j(M))$.
\end{lemma}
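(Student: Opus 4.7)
The plan is to compute $\Ext^j_{A_P}(k(P), M_P)$ via a Grothendieck spectral sequence for the composition of functors $\Hom_{A_P}(k(P),-) \circ \Gamma_{PA_P}$. The starting observation is that since $k(P) = A_P/PA_P$ is annihilated by $PA_P$, every $A_P$-linear map from $k(P)$ into an $A_P$-module $X$ factors through the $PA_P$-torsion submodule $\Gamma_{PA_P}(X)$. Hence
\[
\Hom_{A_P}(k(P), -) = \Hom_{A_P}(k(P), -) \circ \Gamma_{PA_P}
\]
as functors on $\Mod(A_P)$. Since $A_P$ is Noetherian, $\Gamma_{PA_P}$ sends injectives to injectives, so the Grothendieck spectral sequence is available:
\[
E_2^{p,q} = \Ext^p_{A_P}\bigl(k(P),\, H^q_{PA_P}(M_P)\bigr) \;\Longrightarrow\; \Ext^{p+q}_{A_P}\bigl(k(P),\, M_P\bigr).
\]

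Next I would show that this spectral sequence collapses on the $E_2$-page. Under the natural isomorphism $H^q_{PA_P}(M_P) \cong (H^q_P(M))_P$, the hypothesis tells us that each $H^q_{PA_P}(M_P)$ is injective as an $A$-module. By Matlis' structure theorem for injectives over a Noetherian ring, any such module is a direct sum of modules of the form $E_A(A/Q)$; localizing at $P$ yields a direct sum of $E_{A_P}(A_P/QA_P)$'s (those with $Q \subseteq P$), which is injective as an $A_P$-module. Therefore $\Ext^p_{A_P}(k(P), H^q_{PA_P}(M_P)) = 0$ for all $p \geq 1$, and the spectral sequence degenerates to
\[
\Ext^j_{A_P}(k(P), M_P) \;\cong\; \Hom_{A_P}\bigl(k(P),\, H^j_{PA_P}(M_P)\bigr).
\]
Taking $k(P)$-dimensions on both sides gives exactly $\mu_j(P,M) = \mu_0(P, H^j_P(M))$.

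There is no deep obstacle; the argument is entirely formal once the right spectral sequence is set up. The only step that requires a little care is the passage from injectivity over $A$ (as given) to injectivity over $A_P$ (as needed to kill $\Ext^p_{A_P}(k(P),-)$ for $p \geq 1$), and this is precisely where Matlis theory is invoked. An alternative, resolution-level proof is available: take an injective resolution $I^\bullet$ of $M_P$ over $A_P$; then $\Gamma_{PA_P}(I^\bullet)$ is a complex of injective $A_P$-modules whose cohomology consists of injective modules, and a standard splitting argument for such complexes identifies $\Ext^j_{A_P}(k(P),M_P) = H^j(\Hom_{A_P}(k(P),\Gamma_{PA_P}(I^\bullet)))$ with $\Hom_{A_P}(k(P), H^j_{PA_P}(M_P))$. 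Either route yields the desired equality of cardinals.
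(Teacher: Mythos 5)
Your argument is correct. Note that the paper does not actually prove this lemma; it simply quotes it from Lyubeznik's \emph{Finiteness properties of local cohomology modules} [1.4], and your Grothendieck spectral sequence for $\Hom_{A_P}(k(P),-)\circ\Gamma_{PA_P}$, collapsing at $E_2$ because each $H^q_{PA_P}(M_P)\cong (H^q_P(M))_P$ is injective over $A_P$ (via the Matlis decomposition), is precisely the standard proof of that result, with the one delicate point --- upgrading injectivity over $A$ to injectivity over $A_P$ --- handled correctly.
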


As an immediate consequence we get the following.
\begin{lemma}\label{shiftbass}
Let $P$ be a prime ideal in $B$ such that $B_P$ is Gorenstein. Then $\mu_j(P, M_n)= \mu_0(P, H_P^j(M_n))$ for all $j \geq 0$.
\end{lemma}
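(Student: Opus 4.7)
The plan is straightforward since the lemma is announced as an immediate consequence of the two preceding results, and the substance is already in Proposition \ref{inj}. The proof amounts to verifying that the hypothesis of Lemma \ref{ilyu} is satisfied and then invoking that lemma.

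First I would take $V = M_n$ (viewing it as a $B$-module, not necessarily finitely generated) and apply Lemma \ref{ilyu} with $A$ there equal to $B$ and $M$ there equal to $V$. The hypothesis required is that $\left(H_P^j(V)\right)_P$ be an injective $B$-module for all $j \geq 0$. But this is exactly the conclusion of Proposition \ref{inj}: since $B_P$ is Gorenstein, Proposition \ref{inj} gives $\left(H_P^j(M_n)\right)_P \cong E_B(B/P)^{s_j(n)}$ for some (possibly infinite) cardinal $s_j(n)$, which is an injective $B$-module.

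Then Lemma \ref{ilyu} directly yields $\mu_j(P, M_n) = \mu_0(P, H_P^j(M_n))$, which is the desired equality.

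There is no real obstacle here: the content of the lemma is purely a packaging statement that translates the structural information about $\left(H_P^j(M_n)\right)_P$ obtained in Proposition \ref{inj} into a statement about Bass numbers, via the general formula recorded in Lemma \ref{ilyu}. The genuine work has already been done in Lemma \ref{inf} (which reduced $H_{\m A}^j(N_n)$ to a multiple of the top local cohomology $H_{\m A}^g(A)$ using that $N_n$ is an $A * G$-module and that the Galois action permutes the maximal ideals of $A$ lying over $\m$) and in Proposition \ref{inj} (which used the Gorenstein hypothesis on $B_P$ to identify $H_{PB_P}^g(B_P)$ with $E_B(B/P)$).
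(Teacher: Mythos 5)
Your proposal is correct and matches the paper's own (one-line) proof exactly: the paper also derives the lemma by combining Proposition \ref{inj} (which supplies the injectivity hypothesis) with Lemma \ref{ilyu}. No further comment is needed.
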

\begin{proof}
The result follows from Proposition \ref{inj} and Lemma \ref{ilyu}.
\end{proof}

We are now ready to prove the main result of this section. This shows that what the first author observed in \cite{TP2} also holds in our case.
\begin{theorem}\label{bass}
Let $P$ be a prime ideal in $B$ such that $B_P$ is Gorenstein. Fix $j \geq 0$. Then EXACTLY one of the following holds:
\begin{enumerate}[\rm(i)]
\item $\mu_j(P, M_n)$ is infinite for all $n \in \Z$.
\item $\mu_j(P, M_n)$ is finite for all $n \in \Z$. In this case EXACTLY one of the following holds:
\begin{enumerate}[(a)]
\item $\mu_j(P, M_n)=0$ for all $n \in \Z$.
\item $\mu_j(P, M_n) \neq 0$ for all $n \in \Z$.
\item $\mu_j(P, M_n)\neq 0$ for all $n \geq 0$ and $\mu_j(P, M_n)= 0$ for all $n<0$.
\item $\mu_j(P, M_n)\neq 0$ for all $n \leq -m$ and $\mu_j(P, M_n)= 0$ for all $n > -m$.
\end{enumerate}
\end{enumerate}
\end{theorem}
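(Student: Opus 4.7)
The plan is to reduce the statement, by localization and the identifications already established in the section, to the corresponding theorem from \cite{TP2} applied to the $A$-module $N_n$. Since $\mu_j(P,M_n)=\mu_j(PB_P,(M_n)_P)$, I may replace $B$ by $B_P$ and assume $(B,\m)$ is a Gorenstein local ring of dimension $g=\hgt P$. Set $E=E_B(B/\m)$.

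First I would combine Lemma \ref{inf} with Lemma \ref{shiftbass}. Lemma \ref{inf} yields
\[
H_{\m}^{j}(M_n)\;\cong\;H_{\m}^{g}(B)^{s_j(n)}\;\cong\;E^{s_j(n)},
\]
so $\mu_0(\m,H_{\m}^{j}(M_n))=s_j(n)$ as cardinals, and Lemma \ref{shiftbass} then gives the clean formula $\mu_j(\m,M_n)=s_j(n)$. Thus the whole problem is transferred from Bass numbers of $M_n$ to the cardinals $s_j(n)$ produced by Lemma \ref{inf}.

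Next, I would identify these cardinals with Bass numbers over $A$. Let $\n_1,\dots,\n_r$ be the maximal ideals of $A$ lying over $\m$. Inspecting the proof of Lemma \ref{inf}, the cardinal $s_j(n)$ is precisely the common value $\alpha_{lj}(n)$ in the decomposition $H_{\n_l}^{j}(N_n)\cong E_A(A/\n_l)^{\alpha_{lj}(n)}$. Since $A$ is a regular domain, each $A_{\n_l}$ is Gorenstein, so Proposition \ref{inj} and Lemma \ref{ilyu} apply verbatim to the $A*G$-module $N_n$ in place of $M_n$ and give $\alpha_{lj}(n)=\mu_j(\n_l,N_n)$. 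Hence
\[
\mu_j(P,M_n)\;=\;s_j(n)\;=\;\mu_j(\n_l,N_n)\qquad\text{for every }n\in\Z.
\]
This is the crucial equality: it reduces the behavior of the function $n\mapsto \mu_j(P,M_n)$ to that of $n\mapsto \mu_j(\n_l,N_n)$, which concerns the module $N=T'(S)$ over the polynomial ring $S=A[X_1,\dots,X_m]$ with $A$ regular.

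Finally, since $N_n$ is exactly the type of graded component treated in \cite{TP2}, I would invoke the main Bass number theorem of that paper (the analog of the current Theorem \ref{bass} for $N$) applied at the maximal ideal $\n_l$ of $A$. It asserts the same four-case dichotomy with the same transition index $-m$, coming from the fact that the polynomial extension has $m$ variables. Transporting this dichotomy through the equality above immediately yields the stated alternative for $\mu_j(P,M_n)$: the ``infinite for all $n$'' vs.\ ``finite for all $n$'' dichotomy transfers because finiteness of cardinals is preserved under equality, and the vanishing/nonvanishing patterns (a)--(d) transfer for the same reason. The main obstacle I anticipate is ensuring the correct bookkeeping in step two, namely verifying that the cardinal $s_j(n)$ appearing in Lemma \ref{inf} really coincides with $\mu_j(\n_l,N_n)$ as a cardinal (not just as ``zero vs.\ nonzero'' or ``finite vs.\ infinite''); once that identification is made, the theorem is essentially a translation of the analogous result in \cite{TP2}.
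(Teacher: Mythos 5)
Your proposal is correct and follows essentially the same route as the paper: reduce to the Gorenstein local case, use Lemmas \ref{inf} and \ref{shiftbass} to identify $\mu_j(\m,M_n)$ with the cardinal $s_j(n)$, and then transfer the dichotomy from the corresponding result in \cite{TP2} on the $S$-side. The only (immaterial) difference is the form in which \cite{TP2} is invoked: you identify $s_j(n)$ with $\mu_j(\n_l,N_n)$ and quote the main Bass-number theorem for $N=T'(S)$, whereas the paper identifies $s_j(n)$ with $\mu_0(\n_l,H^j_{\m A}(N_n))$ and quotes \cite[Theorem 9.2]{TP2} for the composite Lyubeznik functor $H^j_{\m S}\circ T'$ --- the two are linked precisely by Lemma \ref{ilyu}.
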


\begin{proof}
If $M_n =0$ then $\mu(P, M_n)< \infty$. So without loss of generality we can take $M_n \neq 0$. It is enough to prove this result only for maximal ideal considering $(B, \m)$ is Gorenstein local. Then from the proof of \cite[Theorem 6.1]{TP1} we get $H_{\m A}^j(N_n) = \bigoplus_{l=1}^r H_{\n_l}^j(N_n)= \bigoplus_{l=1}^r E_A(A/\n_l)^{s_j(n)}$ for all $j \geq 0$, where $\n_1, \ldots, \n_r$ are all the maximal ideals of $A$ lying over $\m$. 
\begin{claim*}
$\mu_j(\m, M_n)$ is finite if and only if $s_j(n)$ is finite.
\end{claim*}
By Lemma \ref{shiftbass} we have $\mu_j(\m, M_n)= \mu_0(\m, H_{\m}^j(M_n))$ for all $j \geq 0$. Now by Lemma \ref{inf} we get $H_{\m}^j(M_n) = H_{\m}^g(B)^{s_j(n)}$ where $\dim~B= g$. Furthermore, as $(B, \m)$ is a Gorenstein local ring so $H_{\m}^g(B) \cong E$. Therefore $\mu_0(\m, H_{\m}^j(M_n))= s_j(n)$ and hence $\mu_j(\m, M_n)=s_j(n)$. The claim follows.
	
If $\mu_j(\m, M_n)$ is finite for some $n$ then by the above claim we get $s_j(n)$ is finite. Therefore $\mu_0(\n_l, H_{\m A}^j(N_n))$ is finite for any $l$. Fix $l$. Note that $H_{\m A}^j(N_n)= (H_{\m S}^j(N))_n= (H_{\m S}^j(T'(S)))_n$ and $H_{\m S}^j(T'(-))$ is a Lyubeznik functor on $^*\Mod(S)$. Then by \cite[Theorem 9.2]{TP2} we have $\mu_0(\n_l, H_{\m A}^j(N_r))= s_j(r)$ is finite for all $r \in \Z$ and satisfies one of $(a), ~(b), ~(c), ~(d)$. Therefore by the above claim $\mu(\m, M_r)= s_j(r)$ is finite for all $r$ and satisfies one of $(a), ~(b), ~(c), ~(d)$.
\end{proof}

\section{Growth of Bass numbers}
In this section we will study the behavior of the function $n \mapsto \mu_j(P, M_n)$ as $n \to \infty$ and as $n \to - \infty$.
\begin{theorem}[with hypothesis as in \ref{bs}]\label{growth}
Let $P$ be a prime ideal in $B$ such that $B_P$ is Gorenstein. Fix $j \geq 0$. Suppose $\mu_j(P, M_n)$ is finite for all $n \in \Z$. Then there exist polynomials $f_{M}^{j, P}(Z), g_M^{j, P}(Z)\in \Q[Z]$ of degree $\leq m-1$ such that \[f_{M}^{j, P}(n)=\mu_j(P, M_n) \text{ for all } n\ll0 \quad\text{AND}\quad g_{M}^{j, P}(n)=\mu_j(P, M_n) \text{ for all } n\gg0.\]
\end{theorem}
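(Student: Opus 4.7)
The plan is to reduce to the local Gorenstein case and then transfer the growth problem from Bass numbers of $M_n$ over $B$ to Bass numbers of $N_n$ over $A$, where we can invoke the analogous growth result already established in \cite{TP2}. Since $\mu_j(P, M_n) = \mu_j(PB_P, (M_n)_P)$ and the hypothesis that $B_P$ is Gorenstein is preserved under localization, I would localize at $P$ and assume throughout that $(B,\mathfrak{m})$ is Gorenstein local of dimension $g$, with $\mathfrak{n}_1, \ldots, \mathfrak{n}_r$ the maximal ideals of $A$ lying over $\mathfrak{m}$. Under these assumptions, the key translation was essentially carried out inside the proof of Theorem \ref{bass}: combining Lemma \ref{inf} and Lemma \ref{shiftbass} yields
\[
\mu_j(\mathfrak{m}, M_n) \;=\; \mu_0(\mathfrak{m}, H^j_\mathfrak{m}(M_n)) \;=\; s_j(n),
\]
where $s_j(n)$ is the common multiplicity of $E_A(A/\mathfrak{n}_l)$ appearing in $H^j_{\mathfrak{m}A}(N_n) \cong \bigoplus_{l=1}^r E_A(A/\mathfrak{n}_l)^{s_j(n)}$. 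Equivalently, $s_j(n) = \mu_0(\mathfrak{n}_l, H^j_{\mathfrak{m}A}(N_n))$ for any fixed $l$.

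Next I would observe that $H^j_{\mathfrak{m}A}(N_n) = (H^j_{\mathfrak{m}S}(N))_n = (H^j_{\mathfrak{m}S}(T'(S)))_n$, so the functor $H^j_{\mathfrak{m}S} \circ T'$ is once again a composition of local cohomology functors with respect to homogeneous ideals of $S = A[X_1,\ldots,X_m]$, i.e.\ it falls within the class of functors treated in \cite{TP2}. The finiteness of $\mu_j(\mathfrak{m}, M_n)$ for all $n$ translates, via the claim above, into finiteness of $\mu_0(\mathfrak{n}_l, (H^j_{\mathfrak{m}S}(T'(S)))_n)$ for all $n$, which is precisely the hypothesis under which the growth theorem of \cite{TP2} (the Bass-number analog of the polynomial growth result for graded components of Lyubeznik functors on $S$) produces two polynomials in $\Q[Z]$ of degree at most $m-1$ agreeing with this Bass number for $n \gg 0$ and $n \ll 0$ respectively.

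Defining $g^{j,P}_M(Z)$ and $f^{j,P}_M(Z)$ to be those polynomials, the identity $\mu_j(\mathfrak{m}, M_n) = s_j(n) = \mu_0(\mathfrak{n}_l, (H^j_{\mathfrak{m}S}(T'(S)))_n)$ immediately gives $g^{j,P}_M(n) = \mu_j(P, M_n)$ for $n \gg 0$ and $f^{j,P}_M(n) = \mu_j(P, M_n)$ for $n \ll 0$, with the degree bound $\leq m-1$ inherited from the $S$-side. The main technical point to verify carefully is that the growth statement from \cite{TP2}, which was proved for Bass numbers with respect to a maximal ideal of $A$ (here $\mathfrak{n}_l$), genuinely applies to the composed functor $H^j_{\mathfrak{m}S} \circ T'$; this is the step where one really uses that $T'$ itself is iterated local cohomology at homogeneous ideals of $S$, so that the composition remains in the class of functors for which polynomial growth of graded Bass numbers is known. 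Once this identification is in place, invoking \cite{TP2} and dividing through by the identity $\mu_j(P, M_n) = s_j(n)$ completes the proof.
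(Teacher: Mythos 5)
Your proposal is correct and follows essentially the same route as the paper: reduce to the Gorenstein local case, use the identity $\mu_j(\m, M_n)=s_j(n)=\mu_0(\n_l, H^j_{\m A}(N_n))$ established via Lemmas \ref{inf} and \ref{shiftbass}, recognize $H^j_{\m S}\circ T'$ as a composite of graded local cohomology functors on $S$, and invoke the polynomial-growth theorem of \cite{TP2} (Theorem 1.11 there) for $V=H^j_{\m S}(N)$. The paper's proof is exactly this argument, including your observation that the finiteness hypothesis transfers to $\mu_0(\n_l, V_n)$ before applying \cite{TP2}.
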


\begin{proof}
As in Section 3, it is enough to prove this result only for maximal ideal considering $(B, \m)$ is local. By the claim in the proof of Theorem \ref{bass} we have $\mu_j(\m, M_n)= s_j(n)= \mu_0(\n_l, H_{\m A}^j(N_n))$ for any $1 \leq l \leq r$ where $\n_1, \cdots, \n_r$ are all the maximal ideals of $A$ lying over $\m$. Set $V= H_{\m S}^j(N)$. Clearly $V_n= H_{\m A}^j(N_n)$. Fix $l$. Since $\mu_j(\m, M_n)$ is finite for all $n \in \Z$ so we get $\mu_0(\n_l, H_{\m A}^j(N_n))$ is finite for all $n \in \Z$. Therefore by \cite[Theorem 1.11]{TP2} there exist polynomials $f_{V}^{0, \n_l}(Z), g_V^{0, \n_l}(Z)\in \Q[Z]$ of degree $\leq m-1$ such that $f_{V}^{0, \n_l}(n)=\mu_0(\n_l, H_{\m A}^j(N_n))$ for all $n\ll0$ and $g_{V}^{0, \n_l}(n)=\mu_0(\n_l, H_{\m A}^j(N_n))$ for all $n\gg0$. Take $f_M^{j, \m}(Z)= f_{V}^{0, \n_l}(Z)$ and $g_M^{j, \m}(Z)= g_V^{0, \n_l}(Z)$. The result follows.
\end{proof}

The following result gives some properties of the polynomials appeared in the foregoing Theorem.
\begin{theorem}[with hypothesis as in \ref{bs}]\label{growth1}
Let $P$ be a prime ideal in $B$ such that $B_P$ is Gorenstein. Fix $j \geq 0$. Suppose $\mu_j(P, M_c)=0$ for some $c$ {\rm(} this holds if for instance $M_c=0$\rm{)}. Then 
\begin{align*}
&f_{M}^{j, P}(Z)=0  \quad \text{or} \quad \deg f_{M}^{j, P}(Z)= m-1,\\
&g_{M}^{j, P}(Z)=0 \quad \text{or} \quad \deg g_{M}^{j, P}(Z)= m-1.
\end{align*}
\end{theorem}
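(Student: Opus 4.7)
The plan is to mirror the reduction used in the proof of Theorem \ref{growth} and then invoke the corresponding sharpness result from \cite{TP2} for Bass numbers of graded components of local cohomology modules of $S$.

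First, I would reduce to the case where $P$ is a maximal ideal by localizing at $P$; since $B_P$ is Gorenstein and Bass numbers behave well under localization, I may assume $(B,\m)$ is Gorenstein local with $P=\m$. Let $\n_1,\dots,\n_r$ be the maximal ideals of $A$ lying over $\m$, set $V=H^j_{\m S}(N)$ so that $V_n=H^j_{\m A}(N_n)$, and fix any $l$. From the claim inside the proof of Theorem \ref{bass} we have the equality
\[
\mu_j(\m,M_n)\;=\;s_j(n)\;=\;\mu_0(\n_l,V_n)\qquad\text{for all }n\in\Z,
\]
and in Theorem \ref{growth} the polynomials were set to $f_M^{j,\m}(Z)=f_V^{0,\n_l}(Z)$ and $g_M^{j,\m}(Z)=g_V^{0,\n_l}(Z)$.

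Next, the hypothesis $\mu_j(P,M_c)=0$ translates, after the above localization and identification, into $\mu_0(\n_l,V_c)=0$ for some $c\in\Z$. Now $V=H^j_{\m S}(T'(S))$ is the value at $S$ of a composition of local cohomology functors (a Lyubeznik-type functor on ${}^*\Mod(S)$), so $V$ fits exactly into the framework of \cite{TP2}. The sharpness result there, which is the companion to \cite[Theorem 1.11]{TP2} used in the proof of Theorem \ref{growth} (the analogue of the present Theorem \ref{growth1} for $S$ established in \cite{TP2}), asserts precisely that under the vanishing of $\mu_0(\n_l,V_c)$ for some $c$, each of the two quasi-polynomials $f_V^{0,\n_l}(Z)$ and $g_V^{0,\n_l}(Z)$ is either identically zero or of degree exactly $m-1$.

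Transferring this dichotomy back through the equalities $f_M^{j,\m}=f_V^{0,\n_l}$ and $g_M^{j,\m}=g_V^{0,\n_l}$ yields the desired conclusion for $M$. The only non-cosmetic point is ensuring that the hypothesis of the $S$-side theorem is really met, i.e., that $\mu_0(\n_l,V_n)$ is finite for all $n$; but this was verified in the proof of Theorem \ref{growth} via the same claim from Theorem \ref{bass}, since by assumption $\mu_j(P,M_n)$ is finite for all $n$ (which, in the present setting, is automatic the moment $\mu_j(P,M_c)=0$ for one $c$ by Theorem \ref{bass}(ii)). I do not foresee a genuine obstacle: the entire argument is a translation between the invariant side and the $S$-side via Proposition \ref{inva} and Lemma \ref{shiftbass}, so the main care is bookkeeping to make sure the polynomials on both sides match up correctly and that the cited statement in \cite{TP2} is stated in the exact form needed.
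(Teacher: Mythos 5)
Your proposal is correct and follows essentially the same route as the paper: reduce to the Gorenstein local case, identify $\mu_j(\m,M_n)$ with $\mu_0(\n_l,V_n)$ for $V=H^j_{\m S}(N)$ via the claim in the proof of Theorem \ref{bass}, and then apply the sharpness result for the $S$-side (the paper cites \cite[Theorem 1.12]{TP2}) before transferring back through $f_M^{j,\m}=f_V^{0,\n_l}$ and $g_M^{j,\m}=g_V^{0,\n_l}$. No gaps.
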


\begin{proof}
We have $\mu_j(\m, M_c)= 0 = \mu_0(\n_l, H_{\m A}^j(N_c))$. Set $V= H_{\m S}^j(N)$. Clearly $V_n= H_{\m A}^j(N_n)$. Fix $l$. As $f_M^{j, \m}(Z)= f_{V}^{0, \n_l}(Z)$ and $g_M^{j, \m}(Z)= g_V^{0, \n_l}(Z)$ so by \cite[Theorem 1.12]{TP2} it follows that $f_{M}^{j, \m}(Z)=0 \text{ or } \deg f_{M}^{j, \m}(Z)= m-1$ and $g_{M}^{j, \m}(Z)=0 \text{ or } \deg g_{M}^{j, \m}(Z)= m-1$. 
\end{proof}

\section{Bass numbers when \texorpdfstring{$B_P$}{Bp} is NOT Gorenstein and \texorpdfstring{$m=1$}{m=1}}

We now concentrate on the case when $m=1$. The following result gives us a sufficient condition under which for any fixed $j$ and prime ideal $P$ in $B$ the Bass number $\mu_j(P, M_n)$ is finite for all $n \in \Z$. Recall that under our assumption $B$ is always Cohen-Macaulay.

\begin{theorem}[with standard assumption \ref{sa}]\label{bass1}
Assume $m=1$. Fix $j \geq 0$. Let $B$ be \CM but not necessarily Gorenstein and $P$ be a prime ideal in $B$. Then $\mu_j(P, M_n)$ is finite for all $n \in \Z$. 
\end{theorem}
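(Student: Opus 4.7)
My plan is to first reduce, by localizing, to the case where $P = \m$ is a maximal ideal and $(B, \m)$ is a CM local ring of dimension $g$. Let $\n_1, \ldots, \n_r$ be the maximal ideals of $A$ lying over $\m$. The crucial input is the direct analog in \cite{TP2} of the statement being proved, for the regular ring $A$ and $S = A[X_1]$: it yields $\mu_j^A(\n_l, N_n) < \infty$ for every $j, l, n$. Since $H_{\n_l}^j(N_n)$ is an injective $A$-module (as recorded in the proof of Lemma \ref{inf}), Lyubeznik's Lemma \ref{ilyu} identifies this Bass number with $\mu_0(\n_l, H_{\n_l}^j(N_n)) = s_j(n)$, so $s_j(n) < \infty$.

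By Lemma \ref{inf}, $H_\m^j(M_n) \cong H_\m^g(B)^{s_j(n)}$, a finite direct sum of copies of the Artinian $B$-module $H_\m^g(B)$. The key intermediate step is to observe that $H_\m^g(B)$ has finite Bass numbers over $B$. Local duality for the CM local ring $B$ gives $H_\m^g(B) \cong \Hom_B(\omega_B, E)$ with $\omega_B$ a canonical module and $E = E_B(B/\m)$, and Hom-tensor adjunction yields
\[
\Ext_B^p\!\bigl(B/\m,\ H_\m^g(B)\bigr) \;\cong\; \Hom_B\!\bigl(\Tor_p^B(B/\m, \omega_B),\ E\bigr),
\]
which is finite dimensional over $B/\m$ since $\omega_B$ is finitely generated. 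Hence $\mu_p(\m, H_\m^g(B)) < \infty$ for every $p \geq 0$.

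Finally, I apply the Grothendieck composite-functor spectral sequence arising from the identity $\Hom_B(B/\m, -) \circ \Gamma_\m(-) = \Hom_B(B/\m, -)$:
\[
E_2^{p,q} = \Ext_B^p\!\bigl(B/\m,\ H_\m^q(M_n)\bigr) \ \Longrightarrow\ \Ext_B^{p+q}(B/\m,\ M_n).
\]
By the preceding paragraph each $E_2^{p,q} \cong \Ext_B^p(B/\m, H_\m^g(B))^{s_q(n)}$ is finite dimensional over $B/\m$; since only the $j+1$ nonnegative pairs $(p, q)$ with $p + q = j$ contribute to $\Ext_B^j(B/\m, M_n)$, this Ext is finite dimensional, which gives $\mu_j(\m, M_n) < \infty$.

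The main obstacle is extracting the $m = 1$ Bass-number finiteness result for $N_n$ over $A$ from \cite{TP2}, which is the direct analog over the polynomial ring $S$ of the statement we are proving. Once that input is in place, the remaining ingredients (Lemma \ref{inf}, local duality on the CM ring $B$, and the standard composite-functor spectral sequence) combine routinely. An alternative route, which avoids local duality on $B$, would instead use the change-of-rings spectral sequence $E_2^{p,q} = \Ext_A^p(\Tor_q^B(B/\m, A), N_n) \Rightarrow \Ext_B^{p+q}(B/\m, N_n)$ applied to the $A$-module $N_n$, together with the Reynolds-operator splitting $N_n = M_n \oplus N_n'$ of $B$-modules, to pass from finiteness of $\mu_p^A(\n_l, N_n)$ directly to finiteness of $\mu_j^B(\m, M_n)$.
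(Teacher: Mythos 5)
Your proposal is correct, and after the shared opening moves (localization at $P$, extraction of the finiteness of $s_j(n)$ from the $m=1$ case of \cite{TP2}, and Lemma \ref{inf} giving $H^q_\m(M_n)\cong H^d_\m(B)^{s_q(n)}$) it genuinely diverges from the paper's argument in how this information is converted into finiteness of $\mu_j(\m,M_n)$. The paper works by hand with a minimal injective resolution $\GG$ of $M_n$: it sets $\EE=\Gamma_\m(\GG)$, observes that $\EE^j=E^{r_j}$ with $r_j=\mu_j(\m,M_n)$ and that the differentials of $\Hom_B(l,\EE)$ vanish by minimality, and then runs an induction on $j$ using the exact sequences $0\to B^j\to Z^j\to H^j(\EE)\to 0$ and $0\to Z^j\to E^{r_j}\to E^{r_{j+1}}$; the only fact about $H^d_\m(B)$ it needs is that its socle is finite dimensional (Artinian-ness), since $B^j$ is controlled by the induction hypothesis. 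You instead package the same computation into the composite-functor spectral sequence $\Ext^p_B(l,H^q_\m(M_n))\Rightarrow \Ext^{p+q}_B(l,M_n)$, which is legitimate because $\Gamma_\m$ sends injectives to direct sums of copies of $E$, and the price of the cleaner bookkeeping is that you need \emph{all} Bass numbers of $H^d_\m(B)$ to be finite, not just $\mu_0$; your local-duality/Hom--Tor argument delivers that, with the one caveat that a CM local ring need not admit a canonical module, so you should first pass to $\widehat{B}$ (which does not change Bass numbers at $\m$), or simply note that an Artinian module is the Matlis dual of a finitely generated $\widehat{B}$-module and hence automatically has finite Bass numbers. With that one-line repair your route is complete, shorter, and arguably more transparent; the paper's explicit induction is more elementary in its inputs and is reused almost verbatim as the engine of the proof of Theorem \ref{gbass1}, which is presumably why the authors set it up that way.
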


\begin{proof}
It is enough to prove this result only for maximal ideal considering $(B, \m)$ is local with $\dim B=d$. We have $H_{\m A}^j(N_n)= \bigoplus_{i=1}^r E_A(A/\n_i)^{s_j(n)}$ where $\n_1, \ldots, \n_r$ are all the maximal ideals of $A$ lying over $\m$. Since $H_{\m A}^j(N_n)= (H_{\m S}^j(N))_n$ so by \cite[Theorem 1.9]{TP2} we get $\mu_0(\n_l, H_{\m A}^j(N_n))= s_j(n)$ is finite for all $n\in \Z$. Thus if $B$ is Gorenstein then by the claim of Theorem \ref{bass} we are done.

Otherwise let $\GG$ be a minimal injective resolution of $M_n$ and
\[
\GG^j = \widetilde{\GG^j} \oplus E^{r_j} \quad \text{with} \ \m \notin \Ass(\widetilde{\GG^j}).
\]
Now $\mu_j(\m , M_n) = \dim_l \Ext_{B}^j(l, M_n)$ for $j \geq 0$ and $\Ext_{B}^j(l, M_n) = \Hom_{B}(l, \GG^j)= \Hom_{B}(l, \widetilde{\GG^j} \oplus E^{r_j})= \Hom_{B}(l, \widetilde{\GG^j}) \oplus  \Hom_{B}(l, E^{r_j})= \Hom_{B}(l, E^{r_j})= E_l(l)^{r_j}= l^{r_j}$ and hence $\mu_j(\m, M_n)= r_j$. So we have to prove that $r_j$ is finite for all $j \geq 0$.

Set $\EE = \Gamma_\m(\GG)$. Since $\m \notin \Ass(\widetilde{\GG^j})$ so we have $\Gamma_\m(\widetilde\GG^j)= 0$. Now it is well known that $\Gamma_\m(E^{r_j})= E^{r_j}$. Thus $\EE^j= E^{r_j}$ for all $j \geq 0$. Furthermore by Lemma \ref{inf} we have
\[
H^j(\EE) = H^j_\m(M_n) \cong H^d_\m(B)^{s_j(n)} \quad \text{for some finite} \ s_j(n) \geq 0.
\] 
Note that finiteness of $s_j(n)$ follows from the first part of this proof.
\begin{claim*}
 $\dim_l \Hom_{B}(l, H^j(\EE))$ is finite for all $j \geq 0$.
\end{claim*}
Let $\widehat{B}$ be the completion of $B$ at $\m$. Then $(\widehat{B}, \widehat{\m})$ is a local ring of dimension $d$ with maximal ideal $\widehat{\m}= \m \widehat{B}$ and $E = E_{\widehat{B}}(\widehat{B}/\m \widehat{B}) = E_{\widehat{B}}(l)$. By \cite[3.5.4(d)]{BH} we get $H^d_\m(B) = H^d_{\m \widehat{B}}(\widehat{B})$ and by \cite[3.5.4(a)]{BH} we get $H_\m^d(B)$ is Artinian. Therefore $H^d_\m(B) = H^d_{\m \widehat{B}}(\widehat{B}) \subseteq E^t$ where $t= \dim_l \soc(H_\m^d(B)) < \infty$. The finiteness comes as $\soc (H_\m^d(B)) \subset H_\m^d(B)$ is Artinian (and hence is a finite dimensional $l$ vector space). Hence 
\begin{align*}
\dim_l \Hom_{B}(l, H^j(\EE)) &= \dim_l \Hom_{B}(l, H_\m^d(B))^{s_j(n)} \\
&\leq \dim_l \Hom_{B}(l, E^t)^{s_j(n)}\\
&= \dim_l l^{ts_j(n)} = ts_j(n)
\end{align*} 
is finite and the claim follows.

\vspace{0.25cm}
We now prove by induction that $r_j$ is finite for all $j \geq 0$.

Since $\GG$ is a minimal injective resolution of $M_n$ so we have
$\Hom_{B}(l, \GG^j) \rt \Hom_{B}(l, \GG^{j+1})$ is a zero map for all $j$. As $\Hom_{B}(l, \widetilde{\GG^j})= 0$ for all $j \geq 0$ so we get $\Hom_{B}(l, E^{r_j}) \rt \Hom_{B}(l, E^{r_{j+1}})$ is a zero map for all $j\geq 0$.

For $j = 0$, we have an exact sequence
\[
0 \rt H^0(\EE) \rt E^{r_0} \rt E^{r_1}.
\]
Applying $\Hom_{B}(l,-)$ we get another exact sequence
\[
0 \rt \Hom_{B}(l, H^0(\EE))  \rt l^{r_0} \xrightarrow{0} l^{r_1}.
\]
Hence $l^{r_0} \cong \Hom_{B}(l, H^0(\EE))$ and $r_0=  \dim_l \Hom_{B}(l, H^0(\EE))$ is finite by our claim.

Now let us assume that $r_0,\ldots,r_{m-1}$ are finite and consider the following part of $\EE$;
\[
E^{r_{m-1}} \xrightarrow{d^{m-1}} E^{r_m} \xrightarrow{d^{m}} E^{r_{m+1}}.
\]
Set $Z^m = \ker d^m$ and $B^m = \im d^{m-1}$. Then we have an exact sequence
\[
0 \rt B^m \rt Z^m \rt H^m(\EE) \rt 0.
\]
Since $E$ is Artin and by induction hypothesis $r_{m-1} < \infty$ so we get $E^{r_{m-1}}$ is Artin. Therefore $B^m$ is also Artin and hence is a $\m$-torsion module. Therefore $B^m \subseteq E^s$ for some $s > 0$ (can take $s= \dim_l \soc(B_m)$). Thus $\dim_l (\Hom_{B}(l, B^m)) \leq \dim_l \Hom_{B}(l, E^s)= s < \infty$. 
Since 
\[
0 \rt \Hom_{B}(l, B^m) \rt \Hom_{B}(l, Z^m) \rt \Hom_{B}(l, H^m(\EE))
\]
is an exact sequence so by our claim it follows that $\Hom_{B}(l, Z^m)$ is a finite dimensional $l$-vector space. 

Again we have an exact sequence $0 \rt Z^m \rt E^{r_m} \xrightarrow{d^m} E^{r_{m+1}}$. Applying $\Hom_{B}(l,-)$, we get
\[
0 \rt \Hom_{B}(l, Z^m) \rt \Hom_{B}(l, E^{r_m}) \xrightarrow{0} \Hom_{B}(l, E^{r_{m+1}}).
\]
Therefore $r_m= \dim_l \Hom_{B}(l, E^{r_m})= \dim_l \Hom_{B}(l, Z^m)$ is finite and the result follows.
\end{proof}

\begin{remark}
In the case $m \geq 2$, the above prove goes if $\mu_i(P, M_n) < \infty$ for sone $i$ and $\mu_j(P, M_n) < \infty$ for all $j \leq i$. But we don't know that whether $\mu_i(P, M_n)< \infty$ for some $i$ implies $\mu_j(P, M_n)< \infty$ for all $j<i$ or not.
\end{remark}

The next result gives us a sufficient condition when $\injdim M_n$ is infinite if $0 \neq M_n$ is not injective.

\begin{theorem}[with standard assumption \ref{sa}]\label{gbass1}
Assume $m=1$ and $B_P$ is not Gorenstein for any prime ideal $P$ of $B$. Fix $n \in \Z$. Then EXACTLY one of the following holds:
\begin{enumerate}[\rm(i)]
\item $\mu_j(P, M_n)=0$ for all $j$.
\item there exists $c$ such that $\mu_j(P, M_n)=0$ for $j<c$ and $\mu_j(P, M_n)>0$ for all $j \geq c$.
\end{enumerate}
\end{theorem}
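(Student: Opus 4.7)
The plan is to reduce to the local case, degenerate the Grothendieck spectral sequence $E_2^{p,q} = \Ext^p_B(k, H^q_\m(M_n)) \Rightarrow \Ext^{p+q}_B(k, M_n)$ at its $E_2$-page, and read off the Bass numbers $\mu_j(P, M_n)$ as an explicit linear combination of Betti numbers of the canonical module of $\wh{B}_P$. The non-Gorenstein hypothesis will force every such Betti number to be strictly positive, yielding the upward-closed support pattern claimed.

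As in the proof of Theorem~\ref{bass1}, it suffices to treat the case when $(B, \m)$ is local with $P = \m$ and $d = \dim B$; Theorem~\ref{bass1} provides finiteness of each $r_j := \mu_j(\m, M_n)$, and Lemma~\ref{inf} gives $H^j_\m(M_n) \cong H^d_\m(B)^{s_j}$ for some finite $s_j := s_j(n) \geq 0$ (with $s_j = 0$ for $j > d$ by Grothendieck vanishing). Write $K = K_{\wh{B}}$ for the canonical module, so that $H^d_\m(B) \cong K^\vee$. Since $\wh{B}$ is Cohen-Macaulay we have $\Ext^i_{\wh{B}}(K, K) = 0$ for every $i \geq 1$, and Matlis duality (together with the reflexivity $K^{\vee\vee} \cong K$) translates this into
\[
\Ext^i_B(H^d_\m(B), H^d_\m(B)) \;=\; 0 \qquad \text{for every } i \geq 1.
\]
Because every pairwise Ext-obstruction among the cohomology modules $H^q_\m(M_n) \cong H^d_\m(B)^{s_q}$ vanishes, and only finitely many cohomology degrees contribute, iterative splitting of the Postnikov triangles (equivalently, degeneration of the above spectral sequence at $E_2$) yields
\[
R\Gamma_\m(M_n) \;\simeq\; \bigoplus_{q = 0}^{d} H^d_\m(B)^{s_q}[-q] \qquad \text{in } D(B).
\]

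Using that $k$ is $\m$-torsion (so $R\Hom_B(k, M_n) \simeq R\Hom_B(k, R\Gamma_\m(M_n))$), combined with the standard identification $\Ext^p_B(k, H^d_\m(B)) \cong \Tor^B_p(k, K)^\vee \cong k^{\beta_p(K)}$, the formality above gives
\[
r_j \;=\; \sum_{q = 0}^{\min(j, d)} \beta_{j - q}(K) \cdot s_q.
\]
Because $B_P$ is Cohen-Macaulay but not Gorenstein, $\projdim_{\wh{B}} K = \infty$, and hence $\beta_p(K) > 0$ for every $p \geq 0$. Consequently $r_j > 0$ if and only if $s_q > 0$ for some $q \leq j$: if $\{q : s_q > 0\} = \varnothing$ we are in case~(i), and otherwise letting $c = \min\{q : s_q > 0\}$ places us in case~(ii). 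The main technical burden lies in the formality / spectral-sequence-degeneration step; while each pairwise Ext-obstruction vanishes by the displayed Ext vanishing, one must still verify that no higher Massey-type obstruction intervenes in the iterated splitting across the $d + 1$ relevant cohomology degrees. Since all $\Ext^{\geq 1}$ groups in question vanish, this is handled by induction on the number of nonzero $s_q$.
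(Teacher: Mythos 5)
Your proposal is correct, but it reaches the conclusion by a genuinely different route than the paper. The paper works directly with the complex $\EE=\Gamma_\m(\GG)$ obtained from a minimal injective resolution of $M_n$ and runs a hands-on induction on $j\geq c$, tracking the kernels $Z^j$ and images $B^j$ and proving that their Matlis duals are non-free maximal Cohen--Macaulay modules over $T=\wh{B}$; the splitting $(Z^{j})^\vee\cong\omega^{s_{j}(n)}\oplus(B^{j})^\vee$ is extracted from $\Ext^1_T(\text{MCM},\omega)=0$ at each stage, and positivity of $r_j=\dim_l\Hom_B(l,Z^j)$ comes from $Z^j\neq 0$ plus nonzero socle. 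Your argument packages exactly the same two inputs (the Lemma~\ref{inf} identification $H^q_\m(M_n)\cong H^d_\m(B)^{s_q}$ and the $\Ext$-vanishing against the canonical module) into a formality statement for $R\Gamma_\m(M_n)$ in $D(B)$, and then reads off the closed formula $r_j=\sum_{q\leq j}\beta_{j-q}(K)\,s_q$. That formula is strictly more information than the paper obtains, and the dichotomy (i)/(ii) with $c=\min\{q: s_q>0\}$ drops out immediately from $\projdim_{\wh B}K=\infty$ forcing $\beta_p(K)>0$ for all $p$. The one step you flag as a "technical burden" is in fact unproblematic: the obstruction to splitting off the top cohomology $H^b[-b]$ from $\tau_{\leq b}R\Gamma_\m(M_n)$ lives in a group filtered by subquotients of $\Ext^p_B\bigl(H^b_\m(M_n),H^q_\m(M_n)\bigr)$ with $p\geq 2$ (since $q\leq b-2$ after the shift), and these all vanish because $\Ext^{\geq 1}_B(H^d_\m(B),H^d_\m(B))=0$ and the multiplicities $s_q$ are finite by Theorem~\ref{bass1}; induction on the number of nonzero cohomology degrees then gives formality, hence also the $E_2$-degeneration you invoke. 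Two small points you should make explicit in a final write-up: (a) the identification $\Ext^i_B(-,-)\cong\Ext^i_{\wh B}(-,-)$ for $\m$-torsion modules, which you use silently when transporting $\Ext^i_{\wh B}(K,K)=0$ to $H^d_\m(B)$ and when writing $\Tor_p(k,K)$ over $\wh B$; and (b) the justification that $\beta_p(K)=0$ for some $p$ would force $\projdim_{\wh B}K<\infty$ and hence, by Auslander--Buchsbaum, $K$ free, contradicting the non-Gorenstein hypothesis --- this is where the hypothesis actually enters, matching the paper's use of $\omega$ being a non-free MCM module.
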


\begin{proof}
Fix $n$. Let $\GG$ be a minimal injective resolution of $M_n$ and
\[
\GG^j = \widetilde{\GG^j} \oplus E^{r_j} \quad \text{with} \ \m \notin \Ass(\widetilde{\GG^j}).
\]
Now $\mu_j(\m , M_n) = \dim_l \Ext_{B}^j(l, M_n)$ for $j \geq 0$ and $\Ext_{B}^j(l, M_n) = \Hom_{B}(l, \GG^j)= \Hom_{B}(l, E^{r_j})= l^{r_j}$ and hence $\mu_j(\m, M_n)= r_j$. So by Theorem \ref{bass1} we have $r_j$ is finite for all $j \geq 0$. Let\[c= \min\{j~|~r_j>0\}.\] We have to prove that $r_j>0$ for all $j \geq c$.	
	
Set $\EE = \Gamma_\m(\GG)$. Since $\m \notin \Ass(\widetilde{\GG^j})$ so we have $\Gamma_\m(\widetilde{\GG^j})= 0$. Now it is well known that $\Gamma_\m(E^{r_j})= E^{r_j}$. Thus $\EE^j= E^{r_j}$ for all $j \geq 0$. Furthermore by Lemma \ref{inf} we have
\[
H^j(\EE) = H^j_\m(M_n) \cong H^d_\m(B)^{s_j(n)} \quad \text{for some } s_j(n) \geq 0,
\]
where $\dim B=d$. By the first part of this proof of Theorem \ref{bass1} we have $s_j(n)$ is finite for all $n$. Let $T$ be the completion of $B$ at $\m$. Then $(T, \n)$ is a local ring of dimension $d$ with maximal ideal $\n= \m T$ and $E = E_{T}(T/\n T) = E_{T}(l)$. By \cite[3.5.4]{BH} we have $H^d_\m(B) = H^d_{\n}(T)$ and $H_\m^d(B)$ is Artinian. 
	
Set $Z^j= \ker d^j$ and $B^j= \im d^{j-1}$. Let $(-)^\vee$ be the Matlis dual of $T$. Now we prove the following assertions by induction on $j \geq c$:
\begin{enumerate}[\rm(i)]
\item $Z^j \neq0$;
\item $\injdim Z^j = \infty$;
\item $(Z^j)^\vee$ is a non-free maximal Cohen-Macaulay $T$-module;
\item $B^{j+1} \neq 0$;
\item $\injdim B^{j+1} = \infty$;
\item $(B^{j+1})^\vee$ is a non-free maximal Cohen-Macaulay $T$-module;
\end{enumerate}
Although $(i)$ will prove our assertion we will prove all the above assertion together for $j \geq c$.
	
Since $\GG$ is a minimal injective resolution of $M_n$ so we have $\Hom_{B}(l, \GG^j) \rt \Hom_{B}(l, \GG^{j+1})$ is a zero map for all $j$. As $\Hom_{B}(l, \widetilde{\GG^j})= 0$ for all $j \geq 0$ so we get $\Hom_{B}(l, E^{r_j}) \rt \Hom_{B}(l, E^{r_{j+1}})$ is a zero map for all $j\geq 0$. Now we have an exact sequence $0 \to Z^j \to E ^{r_j} \overset{d^j}{\rt} E^{r_{j+1}}$. Applying $\Hom_B(l, -)$ we get another exact sequence \[0 \to \Hom_{B}(l, Z^j)\rt \Hom_{B}(l, E^{r_j})\overset{0}{\rt} \Hom_{B}(l, E^{r_{j+1}}) .\] Thus $\dim_l \Hom_{B}(l, Z^j)= r_j$. Therefore $Z^c \neq 0$. As $Z^j \subseteq E^{r_j}$ and $r_j=0$ for all $j<c$ it follows that $Z^j =0$ for all $j<c$. Since $B^{c} \cong E^{r_{c-1}}/Z^{c-1}=0$ so we have $Z^c= H^c(\EE)= H_{\m}^d(B)^{s_c(n)}$ for some finite $s_c(n) \geq 0$. As $B$ is not Gorenstein so $T$ is not Gorenstein and hence $H^d_{\n}(T)$ is not injective. Moreover, $H_{\n}^d(T)^\vee= \omega$ the canonical module of $T$. If $\omega$ is free then  $\projdim \omega=0$. But in that case as $\omega^\vee=H_{\n}^d(T)$ it follows that $\injdim H_{\n}^d(T)=0$, a contradiction. Therefore $\omega$ is a non-free maximal Cohen-Macaulay $T$-module. It follows that $(Z^c)^\vee$ is a non-free maximal Cohen-Macaulay $T$-module. Therefore $Z^c$ has infinite injective dimension. As otherwise $\injdim Z^c<\infty$ implies $\projdim (Z^c)^\vee<\infty$ and hence by {\it Auslander-Buchsbaum formula} we get $\projdim (Z^c)^\vee=0$, i.e., $(Z^c)^\vee$ is free (as $T$ is local), a contradiction.
	
We have an exact sequence $0 \to Z^c \to E^{r_c} \to B^{c+1} \to 0$. As $\injdim Z^c= \infty$ it follows that $B^{c+1} \neq 0$ and has infinite injective dimension. By taking $(-)^\vee$ we get an exact sequence \[0 \to (B^{c+1})^\vee \to T^{r_c} \to (Z^c)^\vee \to 0.\] It follows that $(B^{c+1})^\vee$ is a non-free maximal Cohen-Macaulay $T$-module.
	
We now assume the result is true for $j=m$ and prove it for $j=m+1$. We have an exact sequence \[0 \to B^{m+1} \to Z^{m+1} \to H^{m+1}(\EE) \to 0.\] By induction hypothesis as $B^{m+1} \neq 0$ and satisfies $(v)$ and $(vi)$. It follows that $Z^{m+1}\neq 0$. If $H^{m+1}(\EE)= 0$ then clearly $Z^{m+1} \cong B^{m+1}$ satisfies $(ii)$ and $(iii)$. If $H^{m+1}(\EE) \neq 0$ then taking Matlis-duals we get an exact sequence \[0 \to \omega^{s_{m+1}(n)} \to (Z^{m+1})^\vee \to (B^{m+1})^\vee \to 0.\] Now for any maximal Cohen-Macaulay $T$-module $N$ we have $\Ext^1_{T}(N, \omega)= 0$. In particular, $\Ext^1_{T}((B^{m+1})^\vee, \omega^{s_{m+1}(n)})= 0$. Therefore \[(Z^{m+1})^\vee \cong \omega^{s_{m+1}(n)} \oplus (B^{m+1})^\vee.\]Thus $(Z^{m+1})^\vee$ is a non-free maximal Cohen-Macaulay $T$-module. Since $r_i$ is finite so we have $Z^{i} \subseteq E^{r_i}$ is Artinian and hence $B^{i} \subseteq Z^{i}$ is Artinian for all $i$. Therefore by taking duals again we get that\[Z^{m+1} \cong H_{\n}^d(T)^{s_{m+1}(n)} \oplus B^{m+1};\] has infinite injective dimension. 
	
Again we have an exact sequence \[0 \to Z^{m+1} \to E^{r_{m+1}} \to B^{m+2} \to 0.\] As $\injdim Z^{m+1}= \infty$ it follows that $B^{m+2} \neq 0$ and has infinite injective dimension. Taking $(-)^\vee$ we get an exact sequence \[0 \to (B^{m+2})^\vee \to T^{r_{m+1}} \to (Z^{m+1})^\vee \to 0.\] It follows that $(B^{m+2})^\vee$ is a non-free maximal Cohen-Macaulay $T$-module and satisfies $({\rm v})$ and $({\rm vi})$.
\end{proof}

\section{Dimension of Support and injective dimension}

We begin with the following relation which shows that $\injdim M_n$ is finite for any $n \in \Z$ if $B$ is Gorenstein.
\begin{lemma}[with standard assumption \ref{sa}]\label{id}
Let $c \in \Z$. If $B$ is Gorenstein then \[\injdim M_c \leq \dim M_c.\]
\end{lemma}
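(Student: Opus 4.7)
The plan is to convert the bound $\injdim_B M_c \leq \dim_B M_c$ into a vanishing statement for Bass numbers of $M_c$, and then read that vanishing off Grothendieck's vanishing theorem applied to $(M_c)_P$, using the structural description of $(H_P^j(M_c))_P$ established in Section~3. Since $\injdim_B M_c \leq n$ is equivalent to $\mu_j(P, M_c) = 0$ for every prime $P$ of $B$ and every $j > n$, it suffices to show $\mu_j(P, M_c) = 0$ whenever $j > \dim_B M_c$.

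Fix such a $P$ and such $j$. Because $B$ is Gorenstein, $B_P$ is Gorenstein, so Lemma~\ref{shiftbass} gives $\mu_j(P, M_c) = \mu_0(P, H_P^j(M_c))$, and Lemma~\ref{inf} together with the Gorenstein hypothesis identifies $(H_P^j(M_c))_P$ with $E_{B_P}(k(P))^{s_j(c)}$; hence $\mu_j(P, M_c) = s_j(c)$, and it suffices to prove $(H_P^j(M_c))_P = 0$. By flat base change for local cohomology, $(H_P^j(M_c))_P \cong H_{PB_P}^j((M_c)_P)$.

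Now I would apply Grothendieck's vanishing theorem in the form $H_{\m}^i(V) = 0$ for every $i > \dim_R V$ and every module $V$ over a Noetherian local ring $(R, \m)$. The finitely generated case is classical; the extension to arbitrary $V$ follows on writing $V = \varinjlim V_\alpha$ over its finitely generated submodules, since local cohomology commutes with direct limits and $\dim V_\alpha \leq \dim V$. Applied to the $B_P$-module $(M_c)_P$, this yields $H_{PB_P}^j((M_c)_P) = 0$ for $j > \dim_{B_P}(M_c)_P$. Finally, a prime $QB_P$ lies in $\Supp_{B_P}(M_c)_P$ precisely when $Q \in \Supp_B M_c$ with $Q \subseteq P$, so any chain in $\Supp_{B_P}(M_c)_P$ lifts to a chain in $\Supp_B M_c$, giving $\dim_{B_P}(M_c)_P \leq \dim_B M_c$. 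Combined with the preceding paragraph, this produces the required vanishing. The only subtle ingredient is the direct-limit extension of Grothendieck vanishing to modules that are not finitely generated; everything else is a direct application of Lemmas~\ref{inf} and~\ref{shiftbass}.
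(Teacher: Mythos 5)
Your proposal is correct and follows essentially the same route as the paper: reduce $\injdim M_c \leq \dim M_c$ to the vanishing of Bass numbers, use Proposition~\ref{inj} together with Lyubeznik's lemma (i.e.\ Lemma~\ref{shiftbass}) to get $\mu_j(P,M_c)=\mu_0(P,H^j_P(M_c))$, and then invoke Grothendieck's vanishing theorem for $j>\dim M_c$. The only difference is that you spell out details the paper leaves implicit (localization at $P$ and the direct-limit extension of Grothendieck vanishing to non-finitely-generated modules), and these are correct.
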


\begin{proof}
Let $P$ be a prime ideal in $B$. Now by Proposition \ref{inj} along with \cite[1.4]{Lyu-1} we get \[\mu_j(P, M_c)= \mu_0(P, H^j_P(M_c)).\] Moreover, by Grothendieck's Vanishing Theorem $H^j_P(M_c)= 0$ for all $j> \dim M_c$. So $\mu_j(P, M_c)=0$ for all $j> \dim M_c$. 
\end{proof}

The following example shows that Lemma \ref{id} does not hold true if $B$ is not Gorenstein.
\begin{example}
Let $A= \CC[[Y_1, \ldots, Y_n]]$ and $G \subseteq Gl_n(\CC)$ acting linearly with $A^G$ NOT Gorenstein. Let $\m$ and $\m^G$ be maximal ideals of $A$ and $A^G=B$ respectively. As $A^G$ is NOT Gorenstein we have $\injdim H_{\m^G}^n(B)= \infty$. Set $S=A[X_1, \ldots, X_m]$ and $R=B[X_1, \ldots, X_m]$. Set $M=H_{\m^GR}^n(R)= H_{\m^G}^n(B) \otimes_B R$. It follows that $\injdim_B M_0= \infty$.
\end{example}

We now establish the following under the extra hypothesis that $B$ is Gorenstein.
\begin{theorem}[with standard assumption \ref{sa}]\label{idsd}
If $B$ is Gorenstein then the following hold:
\begin{enumerate}[\rm (i)]
\item $\injdim M_c \leq \dim M_c$ for all $c \in \Z$.
\item $\injdim M_n= \injdim M_{-m}$ for all $n \leq -m$.
\item $\injdim M_n= \injdim M_0$ for all $n \geq 0$.
\item If $m \geq 2$ and $-m<r, s<0$ then 
\begin{enumerate}[\rm (a)]
\item $\injdim M_r= \injdim M_{s}$.
\item $\injdim M_r\leq \min\{ \injdim M_{-m}, \injdim M_0\}$.
\end{enumerate}
\end{enumerate}
\end{theorem}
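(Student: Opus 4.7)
The plan is to prove $(\mathrm{i})$ by simply quoting Lemma \ref{id}, and to derive $(\mathrm{ii})$--$(\mathrm{iv})$ from Theorem \ref{bass} applied prime by prime. Since $B$ is Gorenstein, every localization $B_P$ is Gorenstein, so Theorem \ref{bass} is available at every pair $(P, j)$. The only conceptual input that remains is the standard characterization
\[
\injdim_B M_n \;=\; \sup\{\, j \geq 0 \,:\, \mu_j(P, M_n) \neq 0 \text{ for some prime } P \text{ of } B \,\},
\]
which turns the question into a purely combinatorial one about which pairs $(P,j)$ contribute a nonzero Bass number to which $M_n$.

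I would partition the set of pairs $(P, j)$ into five disjoint classes $\mathcal{S}_I, \mathcal{S}_a, \mathcal{S}_b, \mathcal{S}_c, \mathcal{S}_d$ according to which of the five mutually exclusive alternatives of Theorem \ref{bass} the function $n \mapsto \mu_j(P, M_n)$ satisfies: $\mathcal{S}_I$ corresponds to ``infinite for all $n$'' and $\mathcal{S}_a, \ldots, \mathcal{S}_d$ to the four finite subcases $(\mathrm{a}),\ldots,(\mathrm{d})$. A direct inspection of those four patterns then yields
\[
\{(P, j) \,:\, \mu_j(P, M_n) \neq 0\} \;=\; \begin{cases} \mathcal{S}_I \cup \mathcal{S}_b \cup \mathcal{S}_d, & n \leq -m, \\ \mathcal{S}_I \cup \mathcal{S}_b, & -m < n < 0, \\ \mathcal{S}_I \cup \mathcal{S}_b \cup \mathcal{S}_c, & n \geq 0. \end{cases}
\]
Since each union on the right is independent of the particular $n$ inside its range, taking $\sup_j$ yields $(\mathrm{ii})$, $(\mathrm{iii})$, and the equality in $(\mathrm{iv})(\mathrm{a})$ at once (the hypothesis $m \geq 2$ is needed only to guarantee that the middle range is nonempty). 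For $(\mathrm{iv})(\mathrm{b})$ I would just observe the obvious inclusions $\mathcal{S}_I \cup \mathcal{S}_b \subseteq \mathcal{S}_I \cup \mathcal{S}_b \cup \mathcal{S}_c$ and $\mathcal{S}_I \cup \mathcal{S}_b \subseteq \mathcal{S}_I \cup \mathcal{S}_b \cup \mathcal{S}_d$, which on applying $\sup_j$ give exactly $\injdim M_r \leq \min\{\injdim M_{-m}, \injdim M_0\}$.

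I do not expect any real obstacle beyond this bookkeeping: once Theorem \ref{bass} is in hand, everything becomes a short set-theoretic argument. The only subtlety to be aware of is the class $\mathcal{S}_I$, where the Bass number is infinite rather than merely nonzero; such pairs still contribute the same index $j$ to $\injdim M_n$ for every $n \in \Z$, so they cannot disturb any of the three equalities above. It is also worth noting that Lemma \ref{id}, and hence $(\mathrm{i})$, genuinely requires the Gorenstein hypothesis on $B$ (as the preceding Example shows), and the same hypothesis is what puts us into the range of applicability of Theorem \ref{bass} at every prime, so the Gorenstein assumption is essential throughout.
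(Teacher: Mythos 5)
Your proposal is correct and follows essentially the same route as the paper: part (i) is quoted from Lemma \ref{id}, and parts (ii)--(iv) are deduced by combining the Bass-number characterization of injective dimension with the case analysis of Theorem \ref{bass} applied at each pair $(P,j)$, which is exactly what the paper does (your explicit partition into the classes $\mathcal{S}_I,\mathcal{S}_a,\dots,\mathcal{S}_d$ just makes the paper's bookkeeping more transparent).
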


\begin{proof}
Let $B$ be Gorenstein and $P$ be a prime ideal in $B$. 

(i) This follows from Lemma \ref{id}.

(ii) Fix $j \geq 0$. By Theorem \ref{bass}(ii)(d) we get that $\mu_j(P, M_c)>0$ if and only if $\mu_j(P, M_{-m})>0$ for any $c \leq -m$. The result follows.

(iii) Follows by Theorem \ref{bass}(ii)(c) with similar arguments as in (ii).

(iv)(a) and (iv)(b) clearly follows from Theorem \ref{bass}.
\end{proof}

\section{Associated Primes}
In this section we maintain our general assumptions and give a sufficient condition under which the collection of all associated primes of any graded component of \LC is finite. We also establish relations between the set of associated primes of graded components under certain condition.

\begin{theorem}[with standard assumption \ref{sa}]\label{finass}
Further assume that $A$ is a regular local domain or a smooth affine algebra over a field of characteristic zero. Then $\bigcup_{n \in \Z}\Ass_B M_n$ is a finite set. 

Moreover, if $B$ is Gorenstein then
\begin{enumerate}[\rm(1)]
\item $\Ass_B M_n= \Ass_B M_{-m}$ for all $n \leq -m$.
\item $\Ass_B M_n= \Ass_B M_0$ for all $n \geq 0$.
\end{enumerate}
\end{theorem}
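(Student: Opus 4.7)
My plan is to reduce both claims to the analogous statements for the $A$-module $N_n$ proved in the companion paper \cite{TP2}, by passing through the inclusion $M_n = N_n^G \hookrightarrow N_n$ supplied by Proposition~\ref{inva}. The nontrivial input is the finiteness of $\bigcup_n \Ass_A N_n$, which is itself the main theorem on associated primes in \cite{TP2} and rests on the $D$-module structure of local cohomology over the regular ring $S$; once that is cited, the transfer to $R$ should be essentially formal.

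For the finiteness assertion, I would invoke the standard fact that for any ring homomorphism $B \to A$ and any $A$-module $W$, one has $\Ass_B W = \{\mathfrak{q} \cap B : \mathfrak{q} \in \Ass_A W\}$. Applied with $W = N_n$, and combined with the inclusion $M_n \subseteq N_n$ of $B$-modules (which forces $\Ass_B M_n \subseteq \Ass_B N_n$), this yields
\[
\bigcup_{n \in \Z} \Ass_B M_n \;\subseteq\; \left\{\mathfrak{q} \cap B \;:\; \mathfrak{q} \in \bigcup_{n \in \Z} \Ass_A N_n\right\}.
\]
Under our hypothesis on $A$, the corresponding result from \cite{TP2} guarantees that $\bigcup_n \Ass_A N_n$ is finite, and hence so is its contraction to $B$.

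For the second assertion, assume $B$ is Gorenstein, so that $B_P$ is Gorenstein for every prime $P$ of $B$. Then Theorem~\ref{bass} applies with $j = 0$, and since $P \in \Ass_B M_n$ if and only if $\mu_0(P, M_n) \neq 0$, the membership of $P$ in $\Ass_B M_n$ as $n$ varies is governed entirely by which of the patterns $(\mathrm{i})$, $(\mathrm{ii})(\mathrm{a})$--$(\mathrm{d})$ of Theorem~\ref{bass} is realized for that $P$: patterns $(\mathrm{i})$ and $(\mathrm{ii})(\mathrm{b})$ include every $n$; pattern $(\mathrm{ii})(\mathrm{c})$ includes exactly $n \geq 0$; pattern $(\mathrm{ii})(\mathrm{d})$ includes exactly $n \leq -m$; and pattern $(\mathrm{ii})(\mathrm{a})$ includes no $n$. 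In particular, $\Ass_B M_n$ is constant on $\{n \geq 0\}$ and on $\{n \leq -m\}$, yielding (2) and (1) respectively.
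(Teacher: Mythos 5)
Your argument is correct, and the second half (deducing (1) and (2) from Theorem~\ref{bass} with $j=0$ via $P \in \Ass_B M_n \Leftrightarrow \mu_0(P,M_n) \neq 0$) is exactly the paper's argument. For the finiteness assertion, however, you take a genuinely different route. The paper works at the level of the total modules: it first gets $\Ass_S T'(S)$ finite from Lyubeznik's theorems (treating the smooth affine case directly, and the local case by noting that all associated primes of the graded module are homogeneous and then localizing at the homogeneous maximal ideal of $R$), then uses the Reynolds-operator splitting $T'(S) = T(R) \oplus T(L)$ as $R$-modules to conclude $\Ass_R T(R)$ is finite, and finally contracts along $B \hookrightarrow R$. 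You instead work componentwise: $M_n = N_n^G \subseteq N_n$ gives $\Ass_B M_n \subseteq \Ass_B N_n$, the contraction formula along $B \hookrightarrow A$ (valid for arbitrary, not necessarily finitely generated, modules over the Noetherian ring $A$ --- this is in effect \cite[Proposition 12.1]{TP2}, which the paper also uses) gives $\Ass_B N_n = \{\mathfrak{q} \cap B : \mathfrak{q} \in \Ass_A N_n\}$, and you then cite the finiteness of $\bigcup_n \Ass_A N_n$ from \cite{TP2} as a black box. Your version is shorter and avoids redoing the local-case localization and the skew-group-ring splitting, at the cost of leaning on the full strength of the associated-primes theorem of \cite{TP2} rather than only on Lyubeznik's finiteness results; since that theorem is precisely the statement this paper is presenting an analogue of, the citation is legitimate and the proof is complete.
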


To prove this theorem we will use the following fact.

\noindent
{\it Observation}: Let $U$ be a commutative Noetherian ring and $G$ be a finite subgroup of the group of automorphisms of $U$ with $|G|$ is invertible in $U$. Let $V$ be the ring of invariants of $G$. Let $T$ be a Lyubeznik functor on $\Mod(V)$. Then we have an $V$-linear Reynolds operator $\rho^U: U \rt V$ which splits the inclusion map $i: V \hookrightarrow U$. Thus $U= V \oplus L$ as $V$-modules. So $T(U)=T(V) \oplus T(L)$ as $V$-modules. It follows that if $\Ass_V T(U)$ is a finite set then so is $\Ass_V T(V)$.

\begin{proof}[Proof of Theorem \ref{finass}]
If $A$ is a smooth affine algebra then so is $S=A[X_1, \ldots, X_m]$. By \cite[Remark 3.7]{Lyu-1} we get $\Ass_S T'(S)$ is a finite set. Since $T'(S)=T(S)$ as $R$-modules so by the above observation it follows that $\Ass_R T(R)$ is a finite set. Again $i: B \hookrightarrow R$ is a ring homomorphism. So by \cite[Proposition 12.1]{TP2} we get $\Ass_B T(R)$ is also a finite set. Moreover, $T(R)= \bigoplus_{n \in \Z} M_n$ as $B$-module. Therefore $\bigcup_{n \in \Z}\Ass_B M_n = \Ass_B T(R)$ is a finite set.
	
Now assume that $A$ is local with maximal ideal $\n$. Since $B= A^G \subset A$ is an integral extension so we have $\m= \n \cap B$ is a maximal ideal of $B$. Now by {\it lying over theorem} for any maximal ideal $\m'$ of $B$ there exists a maximal ideal of $A$ lying over it. Since $A$ has a unique maximal ideal $\n$ so we get $\m'=\n \cap B= \m$. Thus $B$ is also local with unique maximal ideal $\m$. Note $\mathcal{M}=(\m, X_1, \ldots, X_m)$ is the homogeneous maximal ideal of $R$. As $T(R)$ is a graded $R$-module so by \cite[1.5.6]{BH} all its associated primes are homogeneous and so contained in $\mathcal{M}$. Therefore we have an isomorphism $\Ass_R T(R) \to \Ass_{R_{\mathcal{M}}} (T(R)_{\mathcal{M}})$. Now $T(R)_{\mathcal{M}}= H^{i_1}_{I_1R_{\mathcal{M}}}(H^{i_2}_{I_2R_{\mathcal{M}}}(\cdots H^{i_r}_{I_rR_{\mathcal{M}}}(R_{\mathcal{M}}) \cdots)= G(R_{\mathcal{M}})$ for a Lyubeznik functor $G$ on $\Mod(R_{\mathcal{M}})$. Again by \cite[Lemma 4.1]{TP1} we have $R_{\mathcal{M}}=S_{\mathcal{M}}^G= (S_{\mathcal{M}})^G$.  Notice $G(S_{\mathcal{M}})= H^{i_1}_{I_1S_{\mathcal{M}}}(H^{i_2}_{I_2S_{\mathcal{M}}}(\cdots H^{i_r}_{I_rS_{\mathcal{M}}}(S_{\mathcal{M}}) \cdots)= G'(S_{\mathcal{M}})$, where $G'$ is a Lyubeznik functor on $\Mod(S_{\mathcal{M}})$. By \cite[Theorem 3.4]{Lyu-1} we get $\Ass_{S_{\mathcal{M}}} G'(S_{\mathcal{M}})$ is a finite set. Hence $\Ass_{R_{\mathcal{M}}} G(S_{\mathcal{M}})$ is a finite set by \cite[Proposition 12.1]{TP2}. Then by the observation $\Ass_{R_{\mathcal{M}}} G(R_{\mathcal{M}})$ and hence $\Ass_R T(R)$ is a finite set. Therefore again by \cite[Proposition 12.1]{TP2} it follows that $\Ass_B T(R) = \bigcup_{n \in \Z}\Ass_B M_n$ is a finite set.

Now let $B$ be Gorenstein and \[\bigcup_{n \in \Z}\Ass_B M_n = \{P_1,\ldots, P_l\}.\]

(1) Let $P = P_i$ for some $i$. If $r \leq -m$ then by Theorem \ref{bass} we get that $\mu_0(P,M_r) > 0$ if and only if $\mu_0(P,M_{-m}) > 0$. It follows that $P \in \Ass_B M_r$ if and only if $P \in \Ass_B M_{-m}$. Hence the result follows.

(2) Let $P = P_i$ for some $i$. Let $s \geq 0$. Then by Theorem \ref{bass} we get that $\mu_0(P,M_s) > 0$ if and only if $\mu_0(P,M_0) > 0$. It follows that $P \in \Ass_B M_s$ if and only if $P \in \Ass_B M_{0}$. Hence the result follows.
\end{proof}

\section{Infinite generation}
Our aim in this section is to give a sufficient condition under which $M_n$ is not finitely generated as a $B$-module. Since $A$ is a domain so $B$ is also a domain (as $B \subseteq A$ is a sub-ring).

\begin{theorem}[with standard assumption \ref{sa}]
Let $J$ be a homogeneous ideal in $R$ such that $J \cap B \neq 0$. If $B$ is Gorenstein and $H_J^i(R)_c \neq 0$ then $H_J^i(R)_c$ is NOT finitely generated as a $B$-module.
\end{theorem}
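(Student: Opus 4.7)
Proof proposal: The plan is to derive a contradiction by assuming $M_c := H_J^i(R)_c$ is finitely generated, then exhibiting the injective hull $E_B(B/P)$ (for some prime $P$ of positive height) as a $B_P$-submodule of $(M_c)_P$, which is impossible once the latter is finitely generated over $B_P$.

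First I would fix $0\neq b\in J\cap B$ and exploit the fact that $M=H_J^i(R)$ is $J$-torsion as an $R$-module, so every element of $M$ is annihilated by some power of $b$. Assuming for contradiction that $M_c$ is finitely generated over $B$, some single power $b^k$ annihilates all of $M_c$. Then I would pick any associated prime $P\in\Ass_B M_c$; since $b^k\in\ann(M_c)\subseteq P$ and $B$ is a domain with $b\neq 0$, we must have $\hgt P\geq 1$.

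Next I would use the associated prime to produce an embedding $B/P\hookrightarrow M_c$. Applying the left exact functor $\Gamma_P=H_P^0$ and noting $\Gamma_P(B/P)=B/P$, I get $B/P\hookrightarrow H_P^0(M_c)$; localizing at $P$ shows $(H_P^0(M_c))_P\neq 0$. By Lemma \ref{inf}, since $B$ is Gorenstein and hence $B_P$ is Gorenstein of dimension $g=\hgt P$, we have
\[
(H_P^0(M_c))_P\;\cong\; H^g_{PB_P}(B_P)^{s_0(c)}\;\cong\; E_B(B/P)^{s_0(c)},
\]
with $s_0(c)\geq 1$. On the other hand, since $H_P^0(M_c)\subseteq M_c$ and localization is exact, $(H_P^0(M_c))_P$ embeds in $(M_c)_P$, which by hypothesis is a finitely generated $B_P$-module.

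Combining these two facts yields an injection $E_{B_P}(k(P))\hookrightarrow (M_c)_P$ into a finitely generated $B_P$-module. However, $E_{B_P}(k(P))$ is a finitely generated $B_P$-module if and only if $B_P$ is Artinian, i.e.\ $\hgt P=0$, contradicting $\hgt P\geq 1$. This contradiction forces $M_c$ not to be finitely generated over $B$. The only step that is not immediate is the identification of $(H_P^0(M_c))_P$ with a power of $E_B(B/P)$; but this is exactly the content of Lemma \ref{inf} combined with the Gorenstein hypothesis, so no further obstacle arises.
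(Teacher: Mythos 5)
Your proof is correct, but it takes a genuinely different route from the paper's. The paper also argues by contradiction and also uses the domain hypothesis together with the $J$-torsion property of $H_J^i(R)$ at the final step, but its main engine is injective-dimension bookkeeping: from Theorem \ref{idsd} it gets $\injdim M_c \leq \dim M_c < \infty$, then invokes the Bass formula \cite[Theorem 3.1.17]{BH} to force $\injdim M_c = \depth B = \dim B$ and hence $\dim M_c = \dim B$, concludes that $M_c$ is a maximal Cohen--Macaulay (hence torsion-free) module over the domain $B$, and derives the contradiction from $a \in J \cap B$ acting as a zerodivisor on the $J$-torsion module $M_c$. You instead use the $J$-torsion property at the \emph{start} (to get $b^k M_c = 0$ and hence $\hgt P \geq 1$ for every $P \in \Ass_B M_c$) and then go directly to Lemma \ref{inf} with $j=0$ to embed $E_{B_P}(k(P))$ into the finitely generated module $(M_c)_P$, which is impossible for $\hgt P \geq 1$. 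Your argument bypasses the Bass formula and the results of Section 6 entirely, relying only on Lemma \ref{inf}; it is arguably more self-contained, and in fact it barely uses the Gorenstein hypothesis (only to identify $H^g_{PB_P}(B_P)$ with $E_{B_P}(k(P))$, whereas $H^g_{PB_P}(B_P)$ is already non--finitely-generated for $g \geq 1$ over any Cohen--Macaulay $B_P$). The paper's approach, by contrast, reuses the injective-dimension results it has already established and makes the structural reason for infinite generation (failure of torsion-freeness for an MCM module) more visible. All steps of your argument check out.
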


\begin{proof}
Set $M= H_J^i(R)$. Notice $R_P= B_P[X_1, \ldots, X_n]$ for any prime ideal $P \in \Spec(B)$. Again $(M_c)_P= H_J^i(R)_c \otimes _B B_P= H_j^i(R \otimes_B B_P)_c= H_J^i(R_P)_c$. Clearly if $H_J^i(R_P)_c$ is not finitely generated then $M_c$ is also not finitely generated. Thus it is enough to prove this result considering $B$ is a local ring. 

We prove by contradiction. If possible let $0 \neq H_J^i(R)_c$ be a finitely generated $B$-module. Then we have $\depth M_c \leq \dim M_c$. Since $B$ is Gorenstein so by Theorem \ref{idsd} we have $\injdim M_c \leq \dim M_c < \infty$. Thus by \cite[Theorem 3.1.17]{BH} we get $\dim M_c \leq \injdim M_c= \depth B$. As $B$ is Cohen Macaulay so $\depth B= \dim B$. Together we have $\dim M_c \leq \dim B= \injdim M_c \leq \dim M_c$, i.e., $\dim M_c= \dim B$. It follows that $M_c$ is maximal Cohen Macaulay $B$-module. Moreover, $B$ is a domain. Therefore $M_c$ is torsion-free. Let $0 \neq a \in J \cap B$. Then $0 \neq a^i \in J \cap B$ for all $i \geq 1$. Clearly $a^i$ is $B$-regular. Since $M_c$ is $J$-torsion and $a \in J$ so we get $(0:_M a^j) \neq 0$ for some $j$, a contradiction. 
\end{proof}

\let\cleardoublepage\clearpage


\begin{thebibliography}{15}
\bibitem{BH}
W.~Bruns and J.~Herzog, \emph{{Cohen-Macaulay rings}}, Cambridge Studies in Advanced Mathematics, Vol. 39, Cambridge University Press,~Cambridge, 1993.
		
\bibitem{BS}
M. P. Brodmann and R. Y. Sharp, 
\emph{Local cohomology: an algebraic introduction with geometric applications}, Cambridge Studies in Advanced Mathematics, Vol. 60, Cambridge University Press, Cambridge, 2013.  

\bibitem{TP1}
T. J. Puthenpurakal,
\emph{Local cohomology modules of invariant rings},
Mathematical Proceedings of the Cambridge Philosophical Society, Vol. 160 (2016), No. 2, 299-314.
	
\bibitem{TP2}
T. J. Puthenpurakal,
\emph{ Graded\- components\- of\- Local\- Cohomology\- Modules}, 
Preprint: \href{https://arxiv.org/abs/1701.01270}{ arXiv:1701.01270}.

\bibitem{Lyu-1}
G.~Lyubeznik, 
\emph{Finiteness Properties of Local Cohomology Modules {\rm(}an Application of $D$-modules to Commutative Algebra{\rm)}},
Inv. Math., Vol. 113 (1993), 41–-55.

\end{thebibliography}
\end{document}